\documentclass[11pt,a4paper,reqno]{amsart}

\usepackage[margin=1.05in]{geometry}
\usepackage[T1]{fontenc}
\usepackage[utf8]{inputenc}
\usepackage{lmodern}
\usepackage{amsmath,amssymb,amsthm,mathtools,mathrsfs}
\usepackage{enumitem}
\usepackage{xcolor}
\usepackage[colorlinks=true,linkcolor=blue!60!black,citecolor=red!60!black,urlcolor=blue!70!black]{hyperref}

\numberwithin{equation}{section}

\newtheorem{theorem}{Theorem}[section]
\newtheorem{lemma}[theorem]{Lemma}
\newtheorem{proposition}[theorem]{Proposition}
\newtheorem{corollary}[theorem]{Corollary}

\newtheorem{question}[theorem]{Question}
\theoremstyle{definition}

\newtheorem{example}[theorem]{Example}
\newtheorem{remark}[theorem]{Remark}

\newcommand{\R}{\mathbb{R}}
\newcommand{\B}{\mathbb{B}}                     
\newcommand{\Bcl}{\overline{\mathbb{B}}}        
\newcommand{\D}{\mathbb{D}}
\newcommand{\Sph}{\mathbb{S}}
\newcommand{\cA}{\mathcal{A}}
\newcommand{\cQ}{\mathcal{Q}}
\newcommand{\Hess}{\operatorname{Hess}}
\newcommand{\tr}{\operatorname{tr}}
\newcommand{\Int}{\operatorname{Int}}
\newcommand{\Crit}{\operatorname{Crit}}
\newcommand{\Ric}{\operatorname{Ric}}

\newcommand{\bn}{\mathbf n}

\title[Radial pinching for Gaussian $f$-minimal submanifolds]{Radial pinching and topological rigidity for free boundary Gaussian $f$-minimal submanifolds}
\author{Niang Chen}
\address{Faculty of Arts and Sciences, Beijing Normal University, Zhuhai 519087, China}
\email{chenniang@bnu.edu.cn}

\begin{document}

\begin{abstract}
Let $M^k\subset\Bcl_R^N$ be a smooth compact connected orientable free boundary
$f_c$-minimal submanifold of the closed Euclidean ball, where
$f_c(x)=c|x|^2/2$ and $c\ge0$.  Assume that
$cR^2\le k$ and
\[
 |A_{x^\perp}|^2\le
 1+\frac{1}{k-1}\bigl(1-c|x^\perp|^2\bigr)^2,
 \qquad
 A_{x^\perp}(X,Y)=\langle x^\perp,A(X,Y)\rangle.
\]
We prove that $M$ is diffeomorphic either to $\D^k$ or to
$\Sph^1\times\D^{k-1}$; strict pinching yields the disk.  The proof uses
Hessian convexity of the squared-distance function, a nullity estimate along
its minimum set, and a sublevel-set argument.  In dimension two and codimension
one, the non-disk branch is rotationally symmetric.  We also construct a local
family of embedded rotational examples for small $c\ge0$, with the $c=0$
member equal to the critical catenoid.
\end{abstract}

\maketitle

\section{Introduction}

Free boundary minimal submanifolds in Euclidean balls are related to the
Steklov eigenvalue problem, sharp eigenvalue bounds, and area estimates; see
\cite{FraserSchoen2011,FraserSchoen2016,Brendle2012}.  Rigidity results for the
equatorial disk and the critical catenoid include those of Nitsche,
Fraser--Schoen, and McGrath
\cite{Nitsche1985,FraserSchoen2015,McGrath2018}.

Ambrozio and Nunes proved that a compact free boundary minimal surface in
$\Bcl^3$ satisfying
\[
        |A|^2\langle x,\nu\rangle^2\le 2,
\]
where $\nu$ denotes a unit normal, is either the equatorial disk or the critical catenoid
\cite{AmbrozioNunes2021}.  Their proof combines convexity of the
squared-distance function with a Jacobi-field and nodal-set argument.  Li and
Xiong obtained corresponding results in geodesic balls of hyperbolic and
spherical space \cite{LiXiong2018}, and Barbosa and Viana extended the
Euclidean two-dimensional theorem to higher codimension
\cite{BarbosaViana2020}.  Related gap and uniqueness results in conformally
Euclidean balls and rotational domains appear in
\cite{BarbosaGoncalvesPereira2021,BarbosaFreitasMeloVitorio2023}.

For higher-dimensional free boundary minimal submanifolds, Barbosa and Viana
proved a topological rigidity under the full-norm condition
\[
        |x^\perp|^2|A|^2\le \frac{k}{k-1}:
\]
the submanifold is diffeomorphic either to a disk or to a solid tube
\cite{BarbosaViana2022}.  The argument below uses the same
Hessian-convexity and minimum-set framework, but treats the Gaussian
$f$-minimal equation and imposes a condition on the scalar radial form
$A_{x^\perp}$ rather than on the full normal-bundle norm of $A$.

Full-curvature methods give complementary cohomology-vanishing and index
estimates in both unweighted and weighted settings
\cite{ChenGe2022,ZhaoCao2024,ChenGeZhang2023,Chen2026}.  Those results use
global harmonic-form arguments and control the full second fundamental form,
whereas the present proof is based on the radial scalar form and the Hessian of
the squared-distance function.  At $c=0$, the Barbosa--Viana full-norm
condition implies the present radial condition.  For $c>0$, the two conditions
are not pointwise comparable in general because the radial threshold depends
on $c|x^\perp|^2$.

The pinching assumption is also relevant to the annular conclusion.
Fern\'an\-dez, Haus\-wirth, and Mira constructed infinitely many non-rotational
immersed free boundary minimal annuli in the unit ball
\cite{FernandezHauswirthMira2023}.  Thus the unrestricted immersed annulus
class contains examples other than the critical catenoid.  In dimension two
and codimension one, the radial pinching condition used here implies rotational
symmetry of the non-disk branch.

The Gaussian equation is related to the self-shrinker equation.  With the
normalization $f(x)=|x|^2/4$, corresponding to $c=1/2$, it becomes
$\vec H=-x^\perp/2$.  Curvature gap results for self-shrinkers include those of
Le--\v{S}e\v{s}um, Cao--Li, and Ding--Xin
\cite{LeSesum2011,CaoLi2013,DingXin2014}.  Those results concern complete or
closed self-shrinkers rather than free boundary submanifolds in a ball.

The squared-distance Hessian depends only on the scalar projection
$A_{x^\perp}$.  The condition $cR^2\le k$ and the radial pinching inequality
imply its nonnegativity; the proof is given in
Proposition~\ref{prop:pinch-hess}.  Sections~\ref{sec:pinching}--\ref{sec:topology}
analyze the minimum set and derive the topological alternatives.  Section~\ref{sec:surface-rigidity}
proves rotational symmetry in dimension two, and Section~\ref{sec:ode}
studies the rotational shooting problem near the critical catenoid.

\subsection{Notation and main results}

For $c\ge0$, set
\[
        f_c(x)=\frac c2|x|^2.
\]
We use the trace convention
\[
        \vec H=\tr A=\sum_{i=1}^k A(e_i,e_i).
\]
Thus an $f_c$-minimal immersion satisfies
\[
        \vec H+(\bar\nabla f_c)^\perp=0,
        \qquad\text{equivalently}\qquad
        \vec H=-cx^\perp.
\]
For tangent vectors $X,Y$, define the scalar radial second fundamental form by
\[
        A_{x^\perp}(X,Y)=\langle x^\perp,A(X,Y)\rangle.
\]
The notation $M\cong M'$ means that the abstract smooth manifolds $M$ and $M'$
are diffeomorphic.  When embedded submanifolds are said to be congruent up to
ambient rotations, their images are understood to be carried to one another by
an element of $SO(N)$; no classification of multiply covered immersions is
intended.

\begin{theorem}[Topological rigidity]\label{thm:mainA}
Let $k\ge2$, and let $M^k\subset\Bcl_R^N$ be a smooth compact connected
orientable free boundary $f_c$-minimal submanifold with nonempty boundary.
Assume that
\[
        cR^2\le k
\]
and
\begin{equation}\label{eq:main-pinching}
        |A_{x^\perp}|^2\le
        1+\frac{1}{k-1}\left(1-c|x^\perp|^2\right)^2
\end{equation}
on $M$.  Then
\[
        M\cong\D^k
        \qquad\text{or}\qquad
        M\cong\Sph^1\times\D^{k-1}.
\]
If \eqref{eq:main-pinching} is strict at every point of $M$, then
$M\cong\D^k$.
\end{theorem}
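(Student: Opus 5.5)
Set $u=\tfrac12|x|^2$ restricted to $M$. For tangent $X,Y$ we have
\[
\Hess_M u(X,Y)=\langle X,Y\rangle+A_{x^\perp}(X,Y),
\]
so nonnegativity of $\Hess_M u$ amounts to a lower bound of $-1$ on the eigenvalues of $A_{x^\perp}$. The $f_c$-minimal equation gives $\tr A_{x^\perp}=\langle x^\perp,\vec H\rangle=-c|x^\perp|^2$. Let $\lambda_1\le\dots\le\lambda_k$ be the eigenvalues of $A_{x^\perp}$ and write $t=c|x^\perp|^2$. The sum of the remaining eigenvalues is fixed, so by Cauchy--Schwarz
\[
\sum\lambda_i^2\ge\lambda_1^2+\frac{(t+\lambda_1)^2}{k-1}.
\]
If $\lambda_1<-1$, this gives $|A_{x^\perp}|^2>1+\frac{(t-1)^2}{k-1}$, which contradicts \eqref{eq:main-pinching}. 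The hypothesis $cR^2\le k$ should be what guarantees $t\le k$, since $|x^\perp|\le R$. I expect this is used to keep the quadratic in $\lambda_1$ on the correct side of its vertex, so that the monotonicity argument is valid. This is exactly the content of Proposition~\ref{prop:pinch-hess}, which I would invoke. Equality analysis gives more: if $\lambda_1=-1$ at a point, then equality holds in \eqref{eq:main-pinching} there, and $\lambda_2=\dots=\lambda_k$. In particular $\ker\Hess_M u$ is at most one-dimensional at every point.

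Next, $u$ is convex on $M$. On $\partial M$ the free boundary condition makes the outward conormal equal $x/R$, so $\partial_\nu u=R>0$, and the minimum set $\Crit=\{u=\min u\}$ lies in the interior. Convexity of $u$ makes $\Crit$ a connected, totally convex set, and along it every tangent direction lies in $\ker\Hess_M u$. Since the nullity is at most one, $\Crit$ is either a single point or a connected one-dimensional totally geodesic set, i.e.\ a closed geodesic or a geodesic arc. I would then argue that an arc with endpoints cannot occur. At an endpoint $p$, convexity forces $u$ to grow strictly in every direction transverse to the arc, while the nullity-one direction is tangent to the arc. A Taylor/second-order analysis along geodesics from $p$ in directions making a small angle with the arc should then give a contradiction with $p$ being an endpoint. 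If that analysis fails, I would instead use that $\nabla u=x^\top$ vanishes on $\Crit$, so that $\Crit$ lies on a sphere $|x|=r_0$ with $x^\top=0$, and exploit the resulting ODE structure there.

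The topology then follows from a sublevel-set and gradient-flow argument. Take $\varepsilon$-neighborhood sublevel sets $\{u\le\min u+\varepsilon\}$ and flow along $\nabla u/|\nabla u|^2$, which is well defined away from $\Crit$. This flow is outward transverse at $\partial M$ and, by convexity, has no critical points outside $\Crit$. It therefore gives a diffeomorphism of $M$ with a small tubular neighborhood of $\Crit$. That neighborhood is a disk $\D^k$ when $\Crit$ is a point, and $\Sph^1\times\D^{k-1}$ when $\Crit$ is a closed geodesic, the normal bundle being trivial by orientability. If the pinching is strict everywhere, $\Hess_M u>0$, so $\Crit$ is a single point and $M\cong\D^k$.

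The main obstacle is the nullity/degenerate case. The delicate points are showing that a one-dimensional kernel along $\Crit$ really produces a smooth closed geodesic, and ruling out degenerate arcs rather than just appealing to convexity.
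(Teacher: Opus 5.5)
\textbf{Main gap: ruling out a minimum arc with an endpoint.} Your Hessian-convexity step, the equality analysis, the boundary gradient flow and the strict-pinching conclusion agree with the paper. For the nullity bound you should record that $t=c|x^\perp|^2<k$ holds strictly on all of $M$: in the interior $|x^\perp|\le|x|<R$, and $x^\perp=0$ on $\partial M$. At $t=k$ your equality case would allow $A_{x^\perp}=-g$.

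The genuine gap is the one you flag: excluding a minimum set $C$ that is a geodesic arc with an endpoint. A Taylor or second-order analysis at the endpoint cannot do this. It uses only convexity and nullity $\le1$, and these two properties permit endpoints. On $\R\times\R^{k-1}$, take $u(s,y)=|y|^2+h(s)$ with $h(s)=0$ for $s\le0$ and $h(s)=e^{-1/s}$ for $s>0$. Then $u$ is convex for $s<1/2$, its Hessian has nullity at most one, and its minimum set $\{y=0,\ s\le0\}$ terminates at the origin. There its full Taylor expansion coincides with that of $|y|^2$, so no finite-order computation at $p$ sees the difference. Your fallback via ``the ODE structure on the sphere $|x|=r_0$'' is not yet an argument.

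The missing ingredient is a rigidity input. The paper first proves that the immersion is real analytic in $\Int(M)$: in a graph gauge, $\vec H=-cx^\perp$ is an analytic elliptic system, so Morrey's theorem applies. It then applies the analytic Morse lemma with parameters at each $p\in C(M)$ to write $r=m_0+|y|^2+h(u)$, with $h$ real analytic in one variable. The zero set of such an $h$ is either isolated or a full interval, so $C$ is locally a point or a smooth curve with neither endpoints nor branches. This also replaces your appeal to the structure of totally convex sets, and in the circle case $h\equiv0$ gives the Morse--Bott normal form directly. Once analyticity is available, your own framework also closes: $u$ restricted to the geodesic continuing the arc past $p$ is analytic and constant on an interval, hence constant, so the arc cannot stop at $p$.

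\textbf{Two smaller unproved steps.} First, ``convexity makes $C$ connected and totally convex'' and ``no critical points outside $C$'' both require minimizing curves between interior points to be geodesics lying in $\Int(M)$. On a manifold with boundary this is not automatic. The paper proves it in Lemma~\ref{lem:interior-geodesic}: since $x^\perp=0$ on $\partial M$, $\Hess_M r\to g$ near $\partial M$, so a shortest path cannot touch the maximal level $r=R^2/2$.

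Second, if $C$ is a single point where the Hessian has nullity one, the Morse lemma does not apply, so ``the neighborhood is a disk'' needs an argument. The paper uses the analytic normal form $v^{2\ell}+|y|^2$ together with the smooth convex-body theorem. A purely convex argument would also suffice: $u$ is strictly monotone along radial geodesics from the isolated minimum, so small sublevel sets are star-shaped with smooth boundary transverse to the radial field.
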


The Hessian, minimum-set, and equality statements used in the proof are stated
separately in Sections~\ref{sec:pinching}--\ref{sec:topology}.

\begin{theorem}[Rotational rigidity in the non-disk branch]\label{thm:mainB}
Let $\Sigma^2\subset\Bcl_R^3$ be a smooth compact connected orientable free
boundary $f_c$-minimal surface satisfying the assumptions of
Theorem~\ref{thm:mainA}.  If $\Sigma$ is not diffeomorphic to $\D^2$, then $\Sigma$ is invariant under a one-parameter group of
rotations about an axis through the origin.  In particular,
$\Sigma\cong\Sph^1\times\D^1$.
\end{theorem}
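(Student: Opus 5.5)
Theorem~\ref{thm:mainB} concerns the case $k=2$, $N=3$. Since $\Sigma$ is not a disk, Theorem~\ref{thm:mainA} gives $\Sigma\cong\Sph^1\times\D^1$, so the remaining task is to produce a rotational symmetry. Set $u=|x|^2/2$ restricted to $\Sigma$. Its Hessian is
\[
  \Hess u(X,Y)=\langle X,Y\rangle+A_{x^\perp}(X,Y).
\]
By the pinching argument (Proposition~\ref{prop:pinch-hess}), $\Hess u\ge0$. I will use the minimum-set structure from Sections~\ref{sec:pinching}--\ref{sec:topology}. Let $\mu=\min u$ and $\mathcal C=u^{-1}(\mu)$. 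In the non-disk branch, $\mathcal C$ should be a closed embedded geodesic circle $\gamma$ in the interior of $\Sigma$, with $\nabla u=0$ on it. Along $\gamma$, the Hessian has a null direction $\gamma'$. The pinching must then be an equality along $\gamma$, and the nullity estimate forces $\Hess u$ to have rank exactly one there, with $\Hess u(\gamma',\gamma')=0$.

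The next step exploits the Hessian at the minimum. Since $\nabla u=x^\top=0$ on $\gamma$, the position vector is normal there: $x=\langle x,\nu\rangle\nu$, with $|x|^2=2\mu$ constant. Differentiating $\langle x,\gamma'\rangle=0$ gives $1+\langle x,\gamma''\rangle=0$. Hence $\gamma$ lies on the sphere of radius $\sqrt{2\mu}$, and its normal curvature relative to $x$ is constant. Because $x$ is normal to $\Sigma$ along $\gamma$, the surface normal is $\nu=\pm x/|x|$ there. So $\Sigma$ meets the sphere $S_{\sqrt{2\mu}}$ tangentially along $\gamma$, and $\gamma$ is a geodesic of $\Sigma$ lying on that sphere. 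A geodesic of $\Sigma$ whose principal normal is radial is a geodesic of the sphere, i.e.\ a great circle. After a rotation, $\gamma$ is the circle $\{x_3=0,\ |x|=\sqrt{2\mu}\}$, and along $\gamma$ the unit normal is $\nu=x/|x|$, horizontal.

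The last step is to propagate the symmetry by unique continuation. Let $Z$ be the Killing field of rotations about the $x_3$-axis, and consider the Jacobi-type function $\varphi=\langle Z,\nu\rangle$ on $\Sigma$. Because ambient rotations preserve $f_c$ and the ball, $\varphi$ solves the linearized $f_c$-minimal (weighted Jacobi) equation, which is a second-order elliptic equation. It also satisfies the Robin-type free boundary condition inherited from orthogonality to $\partial\B_R$. Along $\gamma$, $Z=\gamma'$ is tangent to $\Sigma$, so $\varphi=0$ on $\gamma$. One more order of vanishing is needed: $\partial_\eta\varphi=0$ in the conormal direction $\eta$ to $\gamma$. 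Computing gives
\[
  \partial_\eta\varphi=\langle \bar\nabla_\eta Z,\nu\rangle-A(\eta,Z).
\]
Here $Z=\gamma'$ and $A(\eta,\gamma')$ is an off-diagonal entry of $A$ along $\gamma$. The Hessian information from the previous step should diagonalize $A$ in the frame $\{\gamma',\eta\}$, since the eigenvector of $\Hess u$ for eigenvalue $0$ is $\gamma'$. The term $\bar\nabla_\eta Z=e_3\times\eta$ paired with $\nu$ vanishes because $\eta=\pm e_3$ along $\gamma$, as $\eta$ is orthogonal to both $\nu$ and $\gamma'$. Thus $\varphi$ vanishes to first order on $\gamma$. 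By the Cauchy uniqueness theorem for the elliptic equation, $\varphi\equiv0$ on $\Sigma$, so $Z$ is tangent everywhere and $\Sigma$ is rotationally invariant. Since $\gamma$ is a great circle centred at the origin, the axis passes through the origin.

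I expect the main obstacle to be the first step: establishing that $\mathcal C$ is a single smooth closed geodesic with $\Hess u$ of rank exactly one along it, and with $A$ diagonal in the frame $\{\gamma',\eta\}$. This requires the equality case of Proposition~\ref{prop:pinch-hess}. In that case the eigenvalues of $A_{x^\perp}$ must be $-1$ and $1-c|x^\perp|^2$, so that the trace condition $\tr A_{x^\perp}=-c|x^\perp|^2$ is consistent. I would first try to read this off directly from the equality statements in Section~\ref{sec:topology}, and otherwise redo the Cauchy--Schwarz equality analysis for $k=2$.
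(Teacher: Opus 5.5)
Your proposal is correct and follows the paper's route: the minimum circle is a great circle, $A$ is diagonal in the frame $\{\gamma',\eta\}$ because $\Hess u\ge0$ and $\Hess u(\gamma',\gamma')=0$ force $\Hess u(\gamma',\cdot)=0$ (so the equality-case analysis you anticipate is not actually needed), and $\langle Z,\nu\rangle$ then vanishes to first order along $\gamma$. You should cite the total convexity of Lemma~\ref{lem:crit-equals-C} for the geodesic property of $\gamma$, since $1+\langle x,\gamma''\rangle=0$ holds for every unit-speed curve on the sphere and does not by itself exclude a small circle; the only real difference from the paper is your last step, where uniqueness for the Cauchy problem on the non-characteristic curve $\gamma$ plus unique continuation replaces the paper's conjugation to a Schr\"odinger equation and Cheng's theorem that critical nodal points of a nontrivial solution are isolated, and both close the argument.
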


The conclusion of Theorem~\ref{thm:mainB} concerns invariance of the immersed
image.  The model-uniqueness statement in
Proposition~\ref{prop:conditional-uniqueness} is restricted to embedded
annuli, thereby excluding nontrivial angular multiple covers.

The following result concerns the rotational shooting problem near $c=0$.

\begin{theorem}[Perturbative Gaussian annular family]\label{thm:local-gaussian-family}
Fix $R>0$.  There exist $\varepsilon_R>0$ and a smooth Gaussian family,
including the unweighted endpoint $c=0$,
$\{\Sigma_c\}_{0\le c<\varepsilon_R}$ of embedded rotational free boundary
$f_c$-minimal annuli in $\Bcl_R^3$, where
\[
        f_c(x)=\frac c2|x|^2,
\]
such that the following statements hold.
\begin{enumerate}[label=\textup{(\roman*)},leftmargin=2.4em]
\item $\Sigma_0$ is the unweighted critical catenoid in $\Bcl_R^3$.
\item If $0<c<\varepsilon_R$, then $\Sigma_c$ is non-flat.
\item Every $\Sigma_c$ satisfies
\begin{equation}\label{eq:local-family-pinching}
        |A_{x^\perp}|^2
        \le 1+\bigl(1-c|x^\perp|^2\bigr)^2,
\end{equation}
and equality holds exactly on the minimum circle of $\Sigma_c$.
\item The corresponding neck radius is the unique shooting root in a
      neighborhood of the critical-catenoid neck radius.  No uniqueness
      assertion is made for roots elsewhere in the full shooting domain.
\end{enumerate}
\end{theorem}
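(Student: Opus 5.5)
The plan is to reduce to an ODE shooting problem for rotational surfaces and run an implicit function argument at the critical catenoid.

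\textbf{Step 1: rotational ODE and boundary condition.} Write a rotational surface about the $x_3$-axis as the profile curve $(r(s),z(s))$, parametrized by arclength, with angle $\theta$ so that $r'=\cos\theta$ and $z'=\sin\theta$. The equation $\vec H=-cx^\perp$ then becomes a first-order system
\[
\theta'=\frac{\cos\theta}{r}-c\,(r\sin\theta-z\cos\theta),
\]
with sign conventions fixed by the normal. By the reflection symmetry $z\mapsto -z$, we shoot from a neck point $(r_0,0)$ with $\theta(0)=\pi/2$. Let $s_R(r_0,c)$ be the first time the profile meets $|x|=R$. The free boundary condition is orthogonality, namely that the profile is radial there:
\[
\Phi(r_0,c):=r\sin\theta-z\cos\theta\big|_{s=s_R}=0.
\]

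\textbf{Step 2: the unweighted endpoint and the IFT.} At $c=0$ the solutions are catenoids $r=r_0\cosh(z/r_0)$. The critical catenoid in $\Bcl_R^3$ corresponds to the unique root $r_0^*$, obtained by scaling the unit-ball critical catenoid. Smooth dependence of ODE solutions on $(r_0,c)$ follows from transversality of the exit from the ball, since the catenoid is not tangent to the sphere there. This makes $\Phi$ smooth near $(r_0^*,0)$.

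The main obstacle is the nondegeneracy $\partial_{r_0}\Phi(r_0^*,0)\neq0$. Because of scaling, $\Phi(r_0,0)$ reduces to the one-variable function $g(t)=t\tanh t-1$ in $t=z/r_0$, evaluated at the exit point. Its derivative is nonzero at the root $t_0\approx1.2$. This is where the known uniqueness of the critical catenoid enters, and I would verify it by an explicit computation on the catenoid family.

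The IFT then gives a unique smooth root $r_0(c)$ near $r_0^*$ for $0\le c<\varepsilon_R$, which is item (iv). Reflecting across $z=0$ and rotating produces $\Sigma_c$, and item (i) holds by construction. Embeddedness persists for small $c$ by $C^1$-closeness to the embedded catenoid, since the profile stays a graph $r=r(z)$ over $z$.

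\textbf{Step 3: non-flatness and pinching.} For item (ii), I would argue that flat pieces cannot occur. Rotational flat surfaces are planes or annular pieces of planes, and these do not arise as $C^1$-perturbations of the catenoid. Non-flatness is therefore automatic for small $c$.

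For item (iii), compute the principal curvatures $\kappa_1=\theta'$ and $\kappa_2=\cos\theta/r$. With $u=\langle x,\nu\rangle$,
\[
|A_{x^\perp}|^2=u^2(\kappa_1^2+\kappa_2^2),\qquad \kappa_1+\kappa_2=-cu.
\]
The inequality thus becomes a condition on the function
\[
F=1+(1-cu^2)^2-u^2(\kappa_1^2+\kappa_2^2)
\]
along the profile.

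At $c=0$ on the catenoid, $u^2\kappa_2^2=\tfrac{u^2}{r_0^2}\operatorname{sech}^4(z/r_0)$. One checks that $F\ge0$ with equality exactly at $z=0$, where $u=r_0$ and $|A_{x^\perp}|^2=2$. Moreover $F$ has a nondegenerate minimum there, i.e. $F''(0)>0$ in $z$, and $F$ is bounded below by a positive constant away from a neighborhood of $z=0$.

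For small $c$, $F$ varies by $O(c)$ in $C^2$. At the neck, $\theta=\pi/2$ and $u=r_0$. Using the symmetric exact relations I would show that $F$ still vanishes at $z=0$: with $\kappa_2=0$ at the neck I expect $\kappa_1=-cr_0$ together with the identity $u\kappa_1$ matching $1-cu^2$ there. This identity must be checked carefully, and I expect it to be the main risk. If it fails, the statement needs equality only asymptotically, so I would first verify it by direct computation at the neck. Once it holds, the nondegenerate minimum persists by symmetry, and the strict inequality elsewhere persists by compactness, giving equality exactly on the minimum circle.
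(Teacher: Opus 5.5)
Your architecture (neck shooting, explicit nondegeneracy at the critical catenoid, implicit function theorem, then pinching by perturbation) is the paper's. Your route to strict inequality off the neck is a valid alternative to the paper's exact factorization $\mathscr E=2(1-Qh)(1+Qh-cQ^2)=2\frac{P\cos\theta}{\rho}P'$: it uses evenness of $F$ in $s$, $F_0''(0)>0$, a positive lower bound for $F_0$ away from the neck, and $C^2$-closeness. The genuine gap is the step item (iii) hinges on, namely exact vanishing of $F$ on the neck circle. You leave it unverified, and the mechanism you propose is wrong.

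With your convention $r'=\cos\theta$, the parallel principal curvature is $\sin\theta/r$, not $\cos\theta/r$. The correct system is $\theta'=c(r\sin\theta-z\cos\theta)-\sin\theta/r$. With your ODE and $\theta(0)=\pi/2$, the $c=0$ solution is the cylinder $\theta\equiv\pi/2$, not the catenoid. Your own catenoid formula $u^2\kappa_2^2=u^2r_0^{-2}\operatorname{sech}^4(z/r_0)$ silently uses $\sin\theta/r$. So at the neck $|\kappa_2|=1/r_0\neq0$. Your expected neck values $\kappa_2=0$, $\kappa_1=-cu$, $|u|=r_0$ would give $F(0)=2-2cr_0^2\neq0$. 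At $c=0$ this equals $2$, contradicting your own neck value $|A_{x^\perp}|^2=2$. Those values are also incompatible with the identity $u\kappa_1=1-cu^2$ you hope for. Your fallback, ``equality only asymptotically,'' would not prove (iii) at all.

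The missing observation is that the parallel direction $T$ lies in $\ker\Hess r$ at the neck. The neck circle consists of critical points of $r$ on which $r$ is constant, so $0=\Hess r(T,T)=1+u\kappa_2$. Equivalently, $u\kappa_2=\langle x,A(T,T)\rangle=-1$ for either orientation. Combined with $\kappa_1+\kappa_2=-cu$, this gives $u\kappa_1=1-cu^2$, hence $|A_{x^\perp}|^2=1+(1-cu^2)^2$ exactly. This is the $k=2$ algebra of Corollary~\ref{cor:equality-branch}, where Cauchy--Schwarz is an identity and no pinching hypothesis is needed. In the paper it appears as $Q(0)h(0)=1$, which kills the factor $1-Qh$. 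With this in place, your evenness and $C^2$ argument closes.

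You should also check that the neck circle really is the minimum circle. For instance, $r'=P$ along the profile, with $P(0)=0$, $P'(0)=2-ca^2>0$, and $P>0$ on $(0,S]$ by perturbation. Non-flatness then follows at once from $\kappa_2(0)=1/a(c)\neq0$. That is cleaner than your $C^1$-closeness argument, whose list of flat rotational surfaces also omits cylinders and cones.
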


Section~\ref{sec:ode} proves Theorem~\ref{thm:local-gaussian-family} by an
implicit-function argument at the critical catenoid and a factorization of the
pinching deficit.  The theorem gives local existence and local uniqueness near
the critical-catenoid root.  Global uniqueness of the shooting root is not
proved; Proposition~\ref{prop:conditional-uniqueness} records the corresponding
conditional statement.

\section{Preliminaries}

Throughout, submanifolds may be immersed unless embeddedness is explicitly stated.  The sets $C(M)$, geodesics, disk-bundle neighborhoods, and all topological conclusions are understood on the abstract manifold $M$; if the immersion has self-intersections, no assertion is made about the topology of the image as a subset of $\R^N$.

Let $M^k\subset \R^N$ be a smooth immersed submanifold.  We denote by $\bar\nabla$ the Euclidean connection and by $\nabla$ the Levi-Civita connection of $M$.  The second fundamental form is
\[
        A(X,Y)=\bar\nabla_XY-\nabla_XY,
\]
and the mean curvature vector is
\[
        \vec H=\sum_{i=1}^k A(e_i,e_i).
\]
The normal and tangential projections of an ambient vector $v$ are denoted by $v^\perp$ and $v^T$.

Throughout, $\B_R^N$ denotes the \emph{open} ball of radius $R$ centered at the origin in $\R^N$, $\Bcl_R^N$ its closure, and $\partial\B_R^N$ the boundary sphere.  A compact submanifold $M^k\subset \Bcl_R^N$ with nonempty boundary is a free boundary submanifold if
\begin{equation}\label{eq:freebdry-contact}
        M\cap \partial \B_R^N=\partial M,
\end{equation}
and $M$ meets $\partial \B_R^N$ orthogonally along $\partial M$.  Condition \eqref{eq:freebdry-contact} says that $M$ touches the sphere exactly along its boundary; equivalently, $\partial M\subset\partial\B_R^N$ and
\[
        \Int(M)\subset \B_R^N,
\]
that is, the interior of $M$ lies in the open ball.  If $\eta$ denotes the outward unit conormal of $\partial M\subset M$, then
\begin{equation}\label{eq:freebdry-basic}
        \eta=\frac{x}{R},
        \qquad
        x^T=R\eta
        \quad\text{on }\partial M.
\end{equation}
In particular, $x$ is tangent to $M$ along $\partial M$, hence
\begin{equation}\label{eq:xperp-boundary}
        x^\perp=0\qquad\text{on }\partial M.
\end{equation}
For the Gaussian potential
\[
        f(x)=\frac c2|x|^2,
\]
the $f$-minimal equation is
\[
        \vec H+(\bar\nabla f)^\perp=0.
\]
Since $\bar\nabla f=cx$, this gives
\begin{equation}\label{eq:fmin-basic}
        \vec H=-cx^\perp.
\end{equation}

We also use the drift Laplacian on functions
\begin{equation}\label{eq:drift-lap}
        \Delta_f u=\Delta u-\langle \nabla f,\nabla u\rangle.
\end{equation}
This is the sign convention used in the Jacobi equation in Section~\ref{sec:surface-rigidity}.  It should not be confused with the nonnegative weighted Hodge Laplacian sometimes denoted by the same symbol.

\section{Algebraic pinching and Hessian convexity}\label{sec:pinching}

We begin with the elementary Hessian identity for the squared-distance function.

\begin{lemma}[Hessian identity]\label{lem:hess-identity}
Let $r=|x|^2/2$.  For every $X,Y\in T_pM$,
\begin{equation}\label{eq:hess-r-general}
        \Hess_M r(X,Y)=\langle X,Y\rangle+\langle x^\perp,A(X,Y)\rangle.
\end{equation}
\end{lemma}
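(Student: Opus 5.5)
The plan is to compute the Hessian of $r=|x|^2/2$ directly from the definitions, using the Gauss formula $A(X,Y)=\bar\nabla_XY-\nabla_XY$ from the Preliminaries. Fix $p\in M$ and tangent vectors $X,Y\in T_pM$, and extend $Y$ to a local tangent vector field near $p$. The Hessian is tensorial, so the value at $p$ does not depend on the extension chosen.

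The first step is to identify the gradient of $r$. The ambient gradient of $|x|^2/2$ is the position vector $x$. Its restriction to $M$ therefore has gradient $\nabla r=x^T$, since for any tangent $Z$ we have $Z(r)=\langle x,Z\rangle=\langle x^T,Z\rangle$.

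The second step is to differentiate $Y(r)=\langle x,Y\rangle$ in the direction $X$ using the Euclidean connection:
\[
X\langle x,Y\rangle=\langle \bar\nabla_X x,Y\rangle+\langle x,\bar\nabla_XY\rangle=\langle X,Y\rangle+\langle x,\bar\nabla_XY\rangle,
\]
because $\bar\nabla_X x=X$. We then use
\[
\Hess_M r(X,Y)=X(Y r)-(\nabla_XY)r=X\langle x,Y\rangle-\langle x,\nabla_XY\rangle.
\]
Substituting the first display gives
\[
\Hess_M r(X,Y)=\langle X,Y\rangle+\langle x,\bar\nabla_XY-\nabla_XY\rangle=\langle X,Y\rangle+\langle x,A(X,Y)\rangle.
\]

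The last step is to note that $A(X,Y)$ is a normal vector, so $\langle x,A(X,Y)\rangle=\langle x^\perp,A(X,Y)\rangle$. This gives \eqref{eq:hess-r-general}.

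There is no real obstacle here. The only points needing care are the sign convention for $A$, which is fixed as $\bar\nabla_XY-\nabla_XY$, and the fact that the identity is pointwise and tensorial, so it holds for immersions as well, with no use of the $f_c$-minimal or free boundary conditions.
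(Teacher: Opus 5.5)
Your proof is correct and is essentially the paper's computation. The paper writes $\Hess_M r(X,Y)=\langle\nabla^M_X x^T,Y\rangle$ and splits $x^T=x-x^\perp$, whereas you expand $X(Yr)-(\nabla_XY)r$ and project onto the normal part only at the end; both arguments rest on $\bar\nabla_Xx=X$ and the Gauss formula $A(X,Y)=\bar\nabla_XY-\nabla_XY$.
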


\begin{proof}
Since $\nabla^M r=x^T$, we compute
\[
\Hess_M r(X,Y)=\langle \nabla_X^M x^T,Y\rangle.
\]
Writing $x^T=x-x^\perp$ gives
\[
\langle \nabla_X^M x^T,Y\rangle
=\langle \bar\nabla_Xx,Y\rangle-\langle \bar\nabla_Xx^\perp,Y\rangle
=\langle X,Y\rangle+\langle x^\perp,\bar\nabla_XY\rangle,
\]
which is \eqref{eq:hess-r-general}.
\end{proof}

For the radial normal direction, define the scalar symmetric bilinear form
\begin{equation}\label{eq:Axperp-def}
        A_{x^\perp}(X,Y):=\langle x^\perp,A(X,Y)\rangle,
        \qquad
        |A_{x^\perp}|^2=\sum_{i,j=1}^k \langle x^\perp,A(e_i,e_j)\rangle^2.
\end{equation}
Only this scalar projection enters \eqref{eq:hess-r-general}, which may be written as
\begin{equation}\label{eq:hess-g-Axperp}
        \Hess_M r=g+A_{x^\perp}.
\end{equation}

For a normal vector field $\xi$, we write $\xi\otimes g$ for the normal-vector-valued symmetric two-tensor
\[
        (\xi\otimes g)(X,Y)=\xi\,g(X,Y).
\]
Let
\begin{equation}\label{eq:Phi-def}
        \Phi=A-\frac1k\,\vec H\otimes g,
        \qquad
        \Phi(X,Y)=A(X,Y)-\frac1k\vec H\,g(X,Y),
\end{equation}
be the traceless second fundamental form.  Using \eqref{eq:fmin-basic}, we obtain
\[
        A=\Phi-\frac c k\,x^\perp\otimes g,
        \qquad
        A(X,Y)=\Phi(X,Y)-\frac c k x^\perp g(X,Y).
\]
Substituting this into Lemma~\ref{lem:hess-identity} gives the Gaussian Hessian decomposition
\begin{equation}\label{eq:gaussian-hess-decomp}
        \Hess_M r(X,Y)
        =\left(1-\frac c k |x^\perp|^2\right)g(X,Y)
        +\langle x^\perp,\Phi(X,Y)\rangle.
\end{equation}

We use the following standard algebraic estimate.

\begin{lemma}[Traceless eigenvalue estimate]\label{lem:traceless-est}
Let $B$ be a traceless symmetric bilinear form on a $k$-dimensional Euclidean vector space.  Then
\begin{equation}\label{eq:traceless-est}
        \lambda_{\min}(B)\ge -\sqrt{\frac{k-1}{k}}\,|B|.
\end{equation}
\end{lemma}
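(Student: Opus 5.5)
The plan is to diagonalize and reduce the estimate to a single application of Cauchy--Schwarz. Since $B$ is symmetric, we choose an orthonormal basis in which $B$ is diagonal with eigenvalues $\lambda_1\le\lambda_2\le\dots\le\lambda_k$. Then $|B|^2=\sum_i\lambda_i^2$, and the traceless condition reads $\sum_i\lambda_i=0$. Write $\lambda_{\min}=\lambda_1$. If $\lambda_1\ge0$, then tracelessness forces $B=0$ and there is nothing to prove, so we may assume $\lambda_1<0$. (The case $k=1$ is also trivial, since then $B=0$.)

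The key step is to use the trace constraint to express $\lambda_1=-\sum_{i\ge2}\lambda_i$. Cauchy--Schwarz on the remaining $k-1$ eigenvalues gives
\[
        \lambda_1^2=\Bigl(\sum_{i=2}^k\lambda_i\Bigr)^2\le (k-1)\sum_{i=2}^k\lambda_i^2=(k-1)\bigl(|B|^2-\lambda_1^2\bigr).
\]
Rearranging yields $k\lambda_1^2\le (k-1)|B|^2$, that is, $|\lambda_1|\le\sqrt{(k-1)/k}\,|B|$. Since $\lambda_1\le 0$, this is exactly \eqref{eq:traceless-est}.

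There is no real obstacle; the only point needing care is remembering that $\lambda_{\min}$ is nonpositive, so that the bound on $|\lambda_1|$ is equivalent to the stated lower bound. It is also worth recording the equality case for later use in the equality and minimum-set analysis. Equality holds precisely when $\lambda_2=\dots=\lambda_k$, which means $B$ has eigenvalues $-(k-1)\mu$ and $\mu$, the latter with multiplicity $k-1$, for some $\mu\ge0$. This structure is what should later feed the nullity estimate when \eqref{eq:main-pinching} is attained.
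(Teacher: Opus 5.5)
Your proof is correct and is essentially the same as the paper's: you diagonalize $B$, use $\sum_i\lambda_i=0$ to write $-\lambda_1=\sum_{i\ge2}\lambda_i$, and apply Cauchy--Schwarz to obtain $k\lambda_1^2\le(k-1)|B|^2$. Your equality-case characterization ($\lambda_2=\cdots=\lambda_k=\mu\ge0$) is also correct; the paper does not state it in this lemma but carries out the analogous Cauchy--Schwarz equality analysis in its equality-branch corollary.
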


\begin{proof}
Let $\lambda_1\le\cdots\le\lambda_k$ be the eigenvalues of $B$.  Since $\sum_i\lambda_i=0$,
\[
        -\lambda_1=\sum_{i=2}^k\lambda_i.
\]
By Cauchy--Schwarz,
\[
        \lambda_1^2\le (k-1)\sum_{i=2}^k\lambda_i^2
        =(k-1)(|B|^2-\lambda_1^2).
\]
Thus $k\lambda_1^2\le (k-1)|B|^2$, which gives \eqref{eq:traceless-est}.
\end{proof}

\begin{proposition}[Radial projection pinching implies Hessian convexity]\label{prop:pinch-hess}
Assume $cR^2\le k$.  If
\begin{equation}\label{eq:pinch-prop}
        |A_{x^\perp}|^2\le
        1+\frac{1}{k-1}\left(1-c|x^\perp|^2\right)^2
\end{equation}
on $M$, then
\[
        \Hess_M r\ge0.
\]
\end{proposition}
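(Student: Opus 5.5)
The plan is to work directly with the scalar form $B:=A_{x^\perp}$. By \eqref{eq:hess-g-Axperp} we have $\Hess_M r=g+B$, so it suffices to show $\lambda_{\min}(B)\ge -1$ at every point. The trace of $B$ follows from the $f_c$-minimal equation \eqref{eq:fmin-basic}:
\[
\tr B=\langle x^\perp,\vec H\rangle=-c|x^\perp|^2=:-t .
\]
Its traceless part is therefore
\[
B_0=B+\frac tk g,\qquad B_0=\langle x^\perp,\Phi\rangle,
\]
consistent with \eqref{eq:gaussian-hess-decomp}. Since $B_0$ and $g$ are orthogonal, $|B_0|^2=|B|^2-t^2/k$.

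Next I apply Lemma~\ref{lem:traceless-est} to $B_0$. This gives
\[
\lambda_{\min}(B)=-\frac tk+\lambda_{\min}(B_0)\ge -\frac tk-\sqrt{\tfrac{k-1}{k}}\sqrt{|B|^2-\tfrac{t^2}{k}}.
\]
Hence $\lambda_{\min}(B)\ge -1$ holds as soon as
\[
\sqrt{\tfrac{k-1}{k}}\sqrt{|B|^2-\tfrac{t^2}{k}}\le 1-\frac tk .
\]
The right side must be nonnegative before I can square, and this is where $cR^2\le k$ is used. Since $M\subset\Bcl_R^N$, we have $|x^\perp|\le|x|\le R$, so $t=c|x^\perp|^2\le cR^2\le k$. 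Thus $1-t/k\ge0$, and the inequality above is equivalent to its square:
\[
|B|^2\le \frac{t^2}{k}+\frac{k}{k-1}\Bigl(1-\frac tk\Bigr)^2 .
\]

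The remaining step is the algebraic identity
\[
\frac{t^2}{k}+\frac{k}{k-1}\Bigl(1-\frac tk\Bigr)^2=1+\frac{(1-t)^2}{k-1}.
\]
Multiplying both sides by $k(k-1)$, both reduce to $k^2-2kt+kt^2$. So the squared condition is exactly the hypothesis \eqref{eq:pinch-prop}, and $\Hess_M r\ge0$ follows.

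I expect no real obstacle here. The only care needed is the sign check $1-t/k\ge0$, which is precisely what the hypothesis $cR^2\le k$ secures.
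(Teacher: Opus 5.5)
Your proposal is correct and follows essentially the same route as the paper: you pass to the traceless part $\langle x^\perp,\Phi\rangle=B+\frac tk g$, apply Lemma~\ref{lem:traceless-est}, use $cR^2\le k$ to get $1-t/k\ge0$ so the condition can be squared, and finish with the same algebraic identity. Your explicit justification $|x^\perp|\le|x|\le R$ for $t\le k$ is a small detail the paper leaves implicit.
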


\begin{proof}
Fix a point of $M$ and put
\[
        s=|x^\perp|^2,
        \qquad
        \zeta=cs,
        \qquad
        B=A_{x^\perp}.
\]
By $f$-minimality,
\[
        \tr B=\langle x^\perp,\vec H\rangle=-c|x^\perp|^2=-\zeta.
\]
Define
\[
        \Phi_{x^\perp}:=\langle x^\perp,\Phi\rangle.
\]
Since $\Phi=A-k^{-1}\vec H\otimes g$ and $\vec H=-cx^\perp$, we have
\begin{equation}\label{eq:Phi-xperp-is-traceless-part}
        \Phi_{x^\perp}=B+\frac{\zeta}{k}g,
        \qquad
        \tr \Phi_{x^\perp}=0.
\end{equation}
Thus $\Phi_{x^\perp}$ is exactly the traceless part of the scalar form $B$.  Consequently,
\begin{equation}\label{eq:Phi-xperp-norm-identity}
        |\Phi_{x^\perp}|^2
        =|B|^2-\frac{\zeta^2}{k}
        =|A_{x^\perp}|^2-\frac{c^2}{k}|x^\perp|^4.
\end{equation}
Moreover, \eqref{eq:gaussian-hess-decomp} becomes
\begin{equation}\label{eq:hess-in-Phi-xperp}
        \Hess_M r
        =\left(1-\frac{\zeta}{k}\right)g+\Phi_{x^\perp}.
\end{equation}
By Lemma~\ref{lem:traceless-est},
\[
        \lambda_{\min}(\Phi_{x^\perp})
        \ge -\sqrt{\frac{k-1}{k}}\,|\Phi_{x^\perp}|.
\]
Since $cR^2\le k$ gives $1-\zeta/k\ge0$, it is enough to require
\begin{equation}\label{eq:suff-Phi-xperp}
        |\Phi_{x^\perp}|^2
        \le
        \frac{k}{k-1}\left(1-\frac{\zeta}{k}\right)^2.
\end{equation}
Using \eqref{eq:Phi-xperp-norm-identity}, condition \eqref{eq:suff-Phi-xperp} is equivalent to
\[
        |A_{x^\perp}|^2
        \le
        \frac{k}{k-1}\left(1-\frac{\zeta}{k}\right)^2+\frac{\zeta^2}{k}.
\]
The right-hand side simplifies exactly as
\[
        \frac{k}{k-1}\left(1-\frac{\zeta}{k}\right)^2+\frac{\zeta^2}{k}
        =1+\frac{(1-\zeta)^2}{k-1}
        =1+\frac{1}{k-1}\left(1-c|x^\perp|^2\right)^2.
\]
This is precisely \eqref{eq:pinch-prop}.  Hence \eqref{eq:suff-Phi-xperp} holds and \eqref{eq:hess-in-Phi-xperp} gives $\Hess_M r\ge0$.
\end{proof}

\begin{remark}
At a fixed point with $\zeta=c|x^\perp|^2$, the condition
\[
        |A_{x^\perp}|^2\le 1+\frac{(1-\zeta)^2}{k-1}
\]
is the sharp Frobenius-norm sufficient condition for the implication
$\lambda_{\min}(A_{x^\perp})\ge -1$, hence for $\Hess_M r=g+A_{x^\perp}\ge0$, when only $\tr A_{x^\perp}=-\zeta$ is prescribed.  Equality occurs for the spectrum
\[
        \operatorname{Spec}(A_{x^\perp})=
        \left\{-1,\frac{1-\zeta}{k-1},\ldots,\frac{1-\zeta}{k-1}\right\}.
\]
Thus the improvement is algebraically lossless at the level of scalar radial projection.
\end{remark}

\section{The minimum set and the radial nullity estimate}\label{sec:nullity}

Let
\begin{equation}\label{eq:C-def}
        C(M)=\{x\in M:r(x)=m_0\},
        \qquad
        m_0=\min_M r.
\end{equation}
Since $r=R^2/2$ on $\partial M$ and
\begin{equation}\label{eq:boundary-derivative-r}
        \partial_\eta r=\langle x,\eta\rangle=R>0
        \qquad\text{on }\partial M,
\end{equation}
a boundary point has strictly negative inward derivative for $r$, so it cannot be a minimum.  Hence
\begin{equation}\label{eq:C-interior}
        C(M)\subset \Int(M).
\end{equation}

The key algebraic ingredient is the radial nullity estimate.

\begin{lemma}[Radial nullity estimate]\label{lem:radial-nullity}
Assume $cR^2\le k$ and the radial pinching condition \eqref{eq:main-pinching}.  For every $p\in C(M)$,
\begin{equation}\label{eq:nullity-leq-one}
        \dim\ker \Hess_M r(p)\le1.
\end{equation}
\end{lemma}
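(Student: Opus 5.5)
The plan is a purely pointwise spectral argument. The only role of $p\in C(M)$ is to guarantee $\zeta:=c|x^\perp(p)|^2<k$. Fix $p\in C(M)$ and set $B=A_{x^\perp}(p)$. By Lemma~\ref{lem:hess-identity}, $\Hess_M r(p)=g+B$, so $\ker\Hess_M r(p)$ is exactly the $(-1)$-eigenspace of $B$. Let $m$ be its dimension and suppose, for contradiction, that $m\ge2$. As in the proof of Proposition~\ref{prop:pinch-hess}, $f_c$-minimality gives $\tr B=-\zeta$.

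\emph{Step 1: $\zeta<k$.} By \eqref{eq:C-interior}, $p$ lies in $\Int(M)\subset\B_R^N$, so $|x^\perp(p)|\le|p|<R$. If $c>0$, then $\zeta<cR^2\le k$. If $c=0$, then $\zeta=0<k$.

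\emph{Step 2: exclude $m=k$.} If $m=k$, then $B=-g$ and $\tr B=-k$. This forces $\zeta=k$, contradicting Step~1. Hence $2\le m\le k-1$.

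\emph{Step 3: a lower bound for $|B|^2$.} The remaining $k-m$ eigenvalues of $B$ sum to $m-\zeta$. By Cauchy--Schwarz their squares sum to at least $(m-\zeta)^2/(k-m)$. Therefore
\[
|B|^2\ge F(m),\qquad F(t):=t+\frac{(t-\zeta)^2}{k-t},\quad t\in[1,k).
\]
The pinching hypothesis \eqref{eq:main-pinching} at $p$ reads exactly $|B|^2\le 1+\frac{(1-\zeta)^2}{k-1}=F(1)$. It therefore suffices to show that $F(m)>F(1)$ whenever $m\ge2$.

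\emph{Step 4: $F$ is strictly increasing.} Write $a=k-t$ and $b=t-\zeta$. A direct differentiation gives
\[
F'(t)=1+\frac{2ab+b^2}{a^2}=\frac{(a+b)^2}{a^2}=\frac{(k-\zeta)^2}{(k-t)^2},
\]
which is strictly positive on $[1,k)$ by Step~1. Hence $F(m)>F(1)\ge|B|^2$, contradicting the bound from Step~3. So $m\le1$.

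\emph{Main obstacle.} The delicate point is Step~4. For $\zeta>t$ the second term of $F'$ is negative, so monotonicity is not evident termwise. Only the perfect-square identity $(k-t)^2+(t-\zeta)(2k-t-\zeta)=(k-\zeta)^2$ settles it, and this identity is exactly where the strict inequality $\zeta<k$ from Step~1 (that is, $p$ interior together with $cR^2\le k$) enters.
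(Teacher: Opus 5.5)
Your proof is correct and follows the paper's argument step by step: you rule out $m=k$ with the trace identity $\tr B=-\zeta$, $\zeta<k$, and then play the Cauchy--Schwarz lower bound $m+(m-\zeta)^2/(k-m)$ against the pinching upper bound. The only cosmetic difference is that you prove monotonicity via $F'(t)=(k-\zeta)^2/(k-t)^2$ rather than factoring $F(m)-F(1)$ directly; integrating your derivative from $1$ to $m$ gives exactly the paper's identity $F(m)-F(1)=\frac{(m-1)(k-\zeta)^2}{(k-1)(k-m)}$.
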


\begin{proof}
At $p\in C(M)$ we have $\nabla^M r=x^T=0$, hence $x=x^\perp$.  Set
\[
        \zeta=c|x^\perp|^2,
        \qquad
        B=A_{x^\perp}.
\]
Since $p\in\Int(M)$, we have $|x|<R$, and from $cR^2\le k$ we get
\begin{equation}\label{eq:zeta-less-k}
        \zeta<k.
\end{equation}
The trace identity is
\begin{equation}\label{eq:B-trace-nullity}
        \tr B=\langle x^\perp,\vec H\rangle=-\zeta.
\end{equation}
Also, at $p$,
\begin{equation}\label{eq:hess-g-B-nullity}
        \Hess_M r=g+B.
\end{equation}

Let
\[
        m=\dim\ker\Hess_M r(p).
\]
We first rule out $m=k$.  If $m=k$, then \eqref{eq:hess-g-B-nullity} gives $B=-g$, hence $\tr B=-k$.  This contradicts \eqref{eq:B-trace-nullity} and \eqref{eq:zeta-less-k}.  Thus $m\le k-1$.

Assume, for contradiction, that $2\le m\le k-1$.  Choose an orthonormal basis $e_1,\ldots,e_k$ diagonalizing $\Hess_M r(p)$, with $e_1,\ldots,e_m$ spanning its kernel.  Since $B=\Hess_M r-g$, the same basis diagonalizes $B$.  Write
\[
        \alpha_j=B(e_j,e_j)=\langle x^\perp,A(e_j,e_j)\rangle.
\]
For $1\le i\le m$, \eqref{eq:hess-g-B-nullity} gives
\[
        \alpha_i=-1.
\]
The trace identity \eqref{eq:B-trace-nullity} gives
\[
        \sum_{j=m+1}^k\alpha_j=m-\zeta.
\]
Because the basis diagonalizes $B$, there is no off-diagonal loss:
\begin{equation}\label{eq:Axperp-exact-diagonal}
        |A_{x^\perp}|^2=|B|^2
        =\sum_{i,j=1}^k B(e_i,e_j)^2
        =\sum_{j=1}^k\alpha_j^2.
\end{equation}
Therefore Cauchy--Schwarz gives the sharp lower bound
\begin{equation}\label{eq:nullity-lower}
        |A_{x^\perp}|^2
        =\sum_{j=1}^k\alpha_j^2
        \ge m+\frac{(m-\zeta)^2}{k-m}.
\end{equation}
The radial pinching condition gives
\begin{equation}\label{eq:nullity-upper}
        |A_{x^\perp}|^2
        \le 1+\frac{(1-\zeta)^2}{k-1}.
\end{equation}
Subtracting \eqref{eq:nullity-upper} from \eqref{eq:nullity-lower}, we obtain the exact factorization
\begin{align*}
\Delta
&=\left(m+\frac{(m-\zeta)^2}{k-m}\right)
  -\left(1+\frac{(1-\zeta)^2}{k-1}\right)  \\
&=\frac{(m-1)(k-\zeta)^2}{(k-1)(k-m)}.
\end{align*}
Since $m\ge2$ and $\zeta<k$, this gives $\Delta>0$, contradicting \eqref{eq:nullity-lower} and \eqref{eq:nullity-upper}.  Therefore $m\le1$.
\end{proof}

\begin{corollary}[Pointwise equality and transverse nondegeneracy]\label{cor:equality-branch}
Under the hypotheses of Theorem~\ref{thm:mainA}, let $p\in C(M)$, set
\[
        \zeta=c|p|^2,
\]
and suppose that
\[
        \dim\ker\Hess_M r(p)=1.
\]
Then the radial pinching is saturated at $p$:
\[
        |A_{x^\perp}|^2(p)=1+\frac{(1-\zeta)^2}{k-1}.
\]
Moreover,
\[
        \operatorname{Spec}\bigl(A_{x^\perp}|_p\bigr)
        =\left\{-1,\frac{1-\zeta}{k-1},\ldots,
        \frac{1-\zeta}{k-1}\right\},
\]
and hence
\[
        \operatorname{Spec}\bigl(\Hess_M r|_p\bigr)
        =\left\{0,\frac{k-\zeta}{k-1},\ldots,
        \frac{k-\zeta}{k-1}\right\}.
\]
In particular, $\Hess_M r(p)$ is positive definite on
$\bigl(\ker\Hess_M r(p)\bigr)^\perp$.
\end{corollary}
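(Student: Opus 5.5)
The plan is to repeat the computation from the proof of Lemma~\ref{lem:radial-nullity} with $m=1$ and keep track of the equality case.

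First record the pointwise facts at $p$. Since $p\in C(M)$ is an interior critical point of $r$, we have $x^T=0$, so $x=x^\perp$ and $\zeta=c|p|^2=c|x^\perp|^2$. By \eqref{eq:C-interior}, $|p|<R$, so $cR^2\le k$ gives $\zeta<k$. At $p$ the Hessian is $\Hess_M r=g+B$ with $B=A_{x^\perp}$, and the trace is $\tr B=-\zeta$.

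Next choose an orthonormal eigenbasis $e_1,\dots,e_k$ of $\Hess_M r(p)$ with $e_1$ spanning the kernel. This basis also diagonalizes $B$, with $\alpha_1=-1$ and $\sum_{j\ge2}\alpha_j=1-\zeta$. Since the basis is diagonal, $|B|^2=\sum_j\alpha_j^2$. Cauchy--Schwarz on the remaining $k-1$ entries gives
\[
|A_{x^\perp}|^2\ge 1+\frac{(1-\zeta)^2}{k-1}.
\]
The reverse inequality is exactly the hypothesis \eqref{eq:main-pinching}. Hence equality holds, which is the saturation claim.

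Equality in Cauchy--Schwarz forces all $\alpha_j$ with $j\ge2$ to be equal, each to $(1-\zeta)/(k-1)$. This gives the stated spectrum of $A_{x^\perp}|_p$. Adding $1$ to each eigenvalue gives the Hessian spectrum, using
\[
1+\frac{1-\zeta}{k-1}=\frac{k-\zeta}{k-1}.
\]
Because $\zeta<k$, this common value is strictly positive. So $\Hess_M r(p)$ is positive definite on the orthogonal complement of its kernel.

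I expect no real obstacle here. The only points needing care are the strict inequality $\zeta<k$, which comes from $p$ being interior, and checking that the equality case of Cauchy--Schwarz pins down every remaining eigenvalue.
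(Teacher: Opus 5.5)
Your proposal is correct and follows essentially the same route as the paper. Both diagonalize at $p$ with $e_1$ spanning the kernel, use $\alpha_1=-1$ and $\tr A_{x^\perp}=-\zeta$, and play Cauchy--Schwarz against the pinching hypothesis; the equality case then fixes the spectrum, and $\zeta<k$ (from $|p|<R$ and $cR^2\le k$) gives positivity of the transverse eigenvalue $(k-\zeta)/(k-1)$.
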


\begin{proof}
Set $B=A_{x^\perp}|_p$.  Since $p\in C(M)$, we have $x=x^\perp$ at $p$, and
\[
        \tr B=\langle x^\perp,\vec H\rangle=-\zeta,
        \qquad
        \Hess_M r(p)=g+B.
\]
Choose an orthonormal basis $e_1,\ldots,e_k$ diagonalizing $B$, with
$e_1$ spanning $\ker\Hess_M r(p)$, and write
$\alpha_j=B(e_j,e_j)$.  Then $\alpha_1=-1$ and
\[
        \sum_{j=2}^k\alpha_j=1-\zeta.
\]
Therefore Cauchy--Schwarz gives
\[
        |A_{x^\perp}|^2(p)
        =1+\sum_{j=2}^k\alpha_j^2
        \ge 1+\frac{(1-\zeta)^2}{k-1}.
\]
The radial pinching gives the reverse inequality, so equality holds throughout.
Equality in Cauchy--Schwarz forces
$\alpha_2=\cdots=\alpha_k=(1-\zeta)/(k-1)$, proving the asserted spectrum of
$A_{x^\perp}|_p$.  Adding the identity gives the spectrum of $\Hess_M r|_p$.
Finally, $p\in\Int(M)$ and $cR^2\le k$, so $\zeta<k$; hence the transverse
eigenvalue $(k-\zeta)/(k-1)$ is strictly positive.
\end{proof}

\section{Analytic structure and sublevel topology}\label{sec:topology}

This section proves the topological part of Theorem~\ref{thm:mainA}.  The argument has four steps: a boundary barrier, the equality $\Crit(r)=C(M)$, analytic structure of the minimum set, and sublevel isotopy.

\begin{lemma}[Boundary barrier and interior geodesics]\label{lem:interior-geodesic}
Let $M^k\subset \Bcl_R^N$ be a compact free boundary submanifold.  If
$p,q\in\Int(M)$ lie in the same connected component of $M$, then a
length-minimizing curve joining $p$ to $q$ may be chosen to be a smooth
geodesic contained in $\Int(M)$.
\end{lemma}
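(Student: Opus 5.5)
Proof plan.  The idea is to use the boundary sphere as a strictly convex barrier, via the fact that $\partial M$ is mean-convex with respect to $M$ in a strong sense: the second fundamental form of $\partial M$ in $M$ is positive definite.

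\emph{Boundary convexity.}  For tangent vectors $X,Y$ to $\partial M$, the free boundary condition \eqref{eq:freebdry-basic} gives $\eta=x/R$ along $\partial M$.  Then
\[
\langle \nabla_X\eta,Y\rangle=\langle\bar\nabla_X(x/R),Y\rangle=\tfrac1R\langle X,Y\rangle .
\]
Thus $\partial M$ is strictly convex, with second fundamental form $\tfrac1R g$ with respect to the inward normal $-\eta$.

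\emph{Existence and interior regularity.}  Since $M$ is compact and connected, a length-minimizing curve $\gamma$ from $p$ to $q$ exists among rectifiable curves in $M$, by Arzel\`a--Ascoli and lower semicontinuity of length.  On any open subinterval where $\gamma$ stays in $\Int(M)$, it is a smooth geodesic by the standard first variation argument.  It therefore suffices to show that $\gamma$ never touches $\partial M$.

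\emph{Ruling out boundary contact.}  I would use the function $\rho=R^2/2-r$, which is a defining function for the boundary: $\rho\ge0$ on $M$, $\rho=0$ exactly on $\partial M$, and $\partial_\eta\rho=-R\neq0$.  Suppose $t_0\in(0,1)$ is the first time $\gamma$ reaches $\partial M$ (the endpoints are interior points).  Work in a collar $\partial M\times[0,\delta)$.  The convexity computed above shows that, near $\partial M$, the distance function $d=\operatorname{dist}(\cdot,\partial M)$ has $\Hess d<0$ on directions tangent to its level sets.  The standard argument for minimizers with convex obstacles then applies: a minimizing curve touching a strictly convex boundary from inside can be shortened by pushing it inward.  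Concretely, take a short subarc $\gamma|_{[t_0-\epsilon,t_0+\epsilon]}$ and compare it with the geodesic of the doubled or extended manifold (extend $M$ slightly past $\partial M$ to an open manifold $\tilde M$).  Near the contact point, the $\tilde M$-geodesic between the two interior endpoints is unique.  By convexity of $\partial M$, that geodesic lies in $\Int(M)$: along it, $\rho$ satisfies $(\rho\circ\sigma)''\le -\langle\nabla_{\dot\sigma}\eta,\dot\sigma\rangle+O(\rho)$, which is negative near $\partial M$, so $\rho\circ\sigma$ has no interior minimum at zero.  Since it is shorter than or equal to $\gamma$'s subarc, and uniqueness of short geodesics holds, $\gamma$ must coincide with it.  Hence $\gamma$ avoids $\partial M$.

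\emph{Main obstacle.}  The delicate step is the maximum-principle argument for $\rho\circ\gamma$ at a contact point, because $\gamma$ is only known to be $C^{1,1}$ there.  I would handle it via the local comparison with the extended-manifold geodesic just described; this avoids needing regularity of $\gamma$ at the contact point.
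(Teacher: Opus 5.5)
Your argument is correct, but it takes a different route from the paper. The paper cites the Alexander--Alexander theorem (shortest paths in a Riemannian manifold with smooth boundary are $C^1$) and then works directly on $\varphi=r\circ\gamma$ at the first contact time $t_0$. There $C^1$ regularity gives $\varphi'(t_0)=0$ at the maximum $\varphi(t_0)=R^2/2$. On the interior side, $\varphi''=1+\langle x^\perp,A(\dot\gamma,\dot\gamma)\rangle\to1$ because $x^\perp=0$ on $\partial M$, which forces $\varphi>R^2/2$ just before $t_0$.

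You use the same barrier: the strict concavity of your $\rho=R^2/2-r$ near $\partial M$ is the same fact, $\Hess r=g$ on $\partial M$. You replace the regularity input by a comparison. You extend to $\tilde M$, take the short $\tilde M$-minimizer $\sigma$ between $\gamma(t_0\pm\epsilon)$, and show by concavity that $\sigma$ lies in $M$. Uniqueness of minimizers in a convex ball of $\tilde M$ then forces $\gamma$ to coincide with $\sigma$. Your route is self-contained, needs no regularity of $\gamma$ at contact points, and works at any contact time, not just the first; the cost is building the extension $\tilde M$. The paper's route needs no extension but relies on a nontrivial external regularity theorem.

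Three points need tightening.
\begin{itemize}
\item Nothing guarantees $\gamma(t_0+\epsilon)\in\Int(M)$, since the contact set might contain an interval. You do not need it: if $\rho\circ\sigma$ is strictly concave and $\ge0$ at both ends, it is $>0$ on the open interval.
\item Your displayed bound via $\langle\nabla_{\dot\sigma}\eta,\dot\sigma\rangle$ only controls directions tangent to level sets of $\rho$. That is enough at an interior minimum of $\rho\circ\sigma$, where $\dot\sigma\perp\nabla\rho$. It is cleaner to apply Lemma~\ref{lem:hess-identity} to the extended immersion: $\Hess_{\tilde M}\rho=-g-A_{x^\perp}$, which is negative definite near $\partial M$ because $x^\perp=0$ there.
\item Coincidence requires $\gamma|_{[t_0-\epsilon,t_0+\epsilon]}$ to be $\tilde M$-minimizing. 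This follows from $L(\sigma)\le L(\gamma|_{[t_0-\epsilon,t_0+\epsilon]})\le L(\sigma)$, where the second inequality holds because $\sigma\subset M$ and $\gamma$ minimizes in $M$. The uniqueness you then invoke should be the version for rectifiable curves in a normal ball.
\end{itemize}
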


\begin{proof}
If $p=q$, the constant curve gives the conclusion, so assume $p\ne q$.
The connected component of $M$ containing $p$ and $q$ is compact as a
length space, so the intrinsic distance between them is realized by a curve
$\gamma:[0,L]\to M$, which we parametrize by arclength.  The regularity needed
here is precisely the conclusion of Alexander--Alexander
\cite[Theorem~1(A), p.~481]{AlexanderAlexander1981}: every shortest path in a
Riemannian $C^3$ manifold with $C^1$ boundary is of class $C^1$.  Since both
the induced metric on $M$ and $\partial M$ are smooth, that theorem applies to
$\gamma$.  On every open parameter interval mapped into $\Int(M)$, the usual
first-variation argument shows that $\gamma$ is a smooth Riemannian geodesic.
For the finer obstacle-contact structure, including the possible failure of
$C^2$ regularity, see
\cite[Section~2, pp.~169--171]{AlexanderBergBishop1987}.  The argument below
uses only the $C^1$ regularity across a contact time and one-sided
second-derivative information on its interior side.

Consider the ambient squared-distance function
\[
        r(x)=\frac12|x|^2
\]
restricted to $M$.  Since $M\subset\Bcl_R^N$,
\[
        r\le\frac{R^2}{2}\quad\text{on }M,
        \qquad
        r=\frac{R^2}{2}\quad\text{on }\partial M.
\]

Suppose, for contradiction, that the contact set
\[
        I=\{t\in[0,L]:\gamma(t)\in\partial M\}
\]
is nonempty.  Since $p,q\in\Int(M)$, the set $I$ is a compact subset of
$(0,L)$.  Let $t_0=\min I$.  Then
\[
        \gamma(t_0)\in\partial M,
        \qquad
        \gamma\bigl([0,t_0)\bigr)\subset\Int(M),
\]
so $\gamma$ is a smooth interior geodesic on $(0,t_0)$.

Set
\[
        \varphi=r\circ\gamma.
\]
Because $\gamma$ is $C^1$, the function $\varphi$ is $C^1$.  Moreover,
\[
        \varphi\le\frac{R^2}{2},
        \qquad
        \varphi(t_0)=\frac{R^2}{2}.
\]
Thus $t_0$ is an interior maximum point of $\varphi$, and hence
\[
        \varphi'(t_0)=0.
\]

For $t\in(0,t_0)$, Lemma~\ref{lem:hess-identity} and
$|\dot\gamma(t)|=1$ give
\[
\begin{aligned}
        \varphi''(t)
        &=\Hess_M r\bigl(\dot\gamma(t),\dot\gamma(t)\bigr)\\
        &=1+
        \left\langle
        x^\perp(\gamma(t)),
        A_{\gamma(t)}\bigl(\dot\gamma(t),\dot\gamma(t)\bigr)
        \right\rangle .
\end{aligned}
\]
The normal projection $x^\perp$ is smooth on $M$ and, by
\eqref{eq:xperp-boundary}, vanishes on $\partial M$.  Since $M$ is compact
and $\gamma$ has unit speed,
\[
\left|
\left\langle
x^\perp(\gamma(t)),
A_{\gamma(t)}\bigl(\dot\gamma(t),\dot\gamma(t)\bigr)
\right\rangle
\right|
\le
|x^\perp(\gamma(t))|\,\|A\|_{C^0(M)}
\longrightarrow0
\]
as $t\uparrow t_0$.  Therefore
\[
        \varphi''(t)\longrightarrow1
        \qquad\text{as }t\uparrow t_0.
\]
Hence there exists $\delta>0$ such that
\[
        \varphi''(t)\ge\frac12
        \qquad
        \text{for all }t\in(t_0-\delta,t_0).
\]

For every $t\in(t_0-\delta,t_0)$, the mean value theorem applied to
$\varphi'$ gives some $\xi\in(t,t_0)$ such that
\[
        \varphi'(t)
        =\varphi'(t_0)-\varphi''(\xi)(t_0-t)
        \le-\frac{t_0-t}{2}.
\]
Consequently,
\[
\begin{aligned}
        \varphi(t)-\varphi(t_0)
        &=-\int_t^{t_0}\varphi'(s)\,ds\\
        &\ge
        \frac12\int_t^{t_0}(t_0-s)\,ds
        =\frac14(t_0-t)^2>0.
\end{aligned}
\]
Thus
\[
        \varphi(t)>\varphi(t_0)=\frac{R^2}{2},
\]
contradicting $\varphi\le R^2/2$ on $M$.

Therefore $I=\varnothing$.  The minimizing curve is contained in
$\Int(M)$, where it is a smooth geodesic.
\end{proof}

\begin{lemma}[Critical set equals minimum set]\label{lem:crit-equals-C}
Under the hypotheses of Theorem~\ref{thm:mainA},
\begin{equation}\label{eq:crit-equals-C}
        \Crit(r)=C(M).
\end{equation}
Moreover, $C(M)$ is compact, connected and totally convex.
\end{lemma}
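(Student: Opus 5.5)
The plan is to combine the convexity from Proposition~\ref{prop:pinch-hess} with the interior-geodesic property from Lemma~\ref{lem:interior-geodesic}, using the standard fact that a convex function along a geodesic has its critical points at its minima.

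Clearly $C(M)\subset\Crit(r)$. Indeed, by \eqref{eq:C-interior} every minimum point lies in $\Int(M)$, and an interior minimum is a critical point. Compactness of $C(M)$ is immediate, since it is the preimage of the single value $m_0$ under the continuous function $r$ on the compact manifold $M$.

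For the reverse inclusion, take $p\in\Crit(r)$. Critical points here are understood as interior critical points. On $\partial M$ we have $\nabla^M r=x^T=R\eta\neq0$ by \eqref{eq:freebdry-basic}, so $p\in\Int(M)$. Pick any $q\in C(M)$, which also lies in $\Int(M)$. Since $M$ is connected, Lemma~\ref{lem:interior-geodesic} gives a smooth unit-speed geodesic $\gamma:[0,L]\to\Int(M)$ with $\gamma(0)=p$ and $\gamma(L)=q$. Set $\varphi=r\circ\gamma$. Then
\[
\varphi''(t)=\Hess_M r(\dot\gamma,\dot\gamma)\ge0
\]
by Proposition~\ref{prop:pinch-hess}, so $\varphi$ is convex. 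Moreover $\varphi'(0)=\langle\nabla^M r(p),\dot\gamma(0)\rangle=0$. A convex function with vanishing derivative at $0$ attains its minimum on $[0,L]$ at $0$. Hence $r(p)=\varphi(0)\le\varphi(L)=m_0$, which forces $r(p)=m_0$ and $p\in C(M)$. This proves \eqref{eq:crit-equals-C}.

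For total convexity, let $p,q\in C(M)$ and let $\gamma$ be any geodesic of $M$ joining them. Such a geodesic exists and is interior by Lemma~\ref{lem:interior-geodesic}. The function $\varphi=r\circ\gamma$ is convex with $\varphi(0)=\varphi(L)=m_0=\min_M r$. Convexity gives $\varphi\le m_0$ on $[0,L]$, while minimality gives $\varphi\ge m_0$, so $\varphi\equiv m_0$ and $\gamma\subset C(M)$. Connectedness of $C(M)$ follows at once, since any two of its points are joined by a minimizing geodesic lying in $C(M)$.

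The main subtlety is the scope of ``totally convex''. For an arbitrary geodesic of $M$ between points of $C(M)$, one must first know that the geodesic does not run into $\partial M$, where the Hessian argument is unavailable. I would state total convexity for geodesics lying in $\Int(M)$, which by definition are the smooth geodesics of $M$, and note that minimizing ones always qualify by Lemma~\ref{lem:interior-geodesic}. A geodesic that stays in $\Int(M)$ and starts in the sublevel set $\{r\le m_0\}$ is handled directly by the convexity argument above, so no further barrier argument is needed.
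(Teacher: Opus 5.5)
Your proposal is correct and follows essentially the same route as the paper: $\Hess_M r\ge0$ from Proposition~\ref{prop:pinch-hess}, an interior minimizing geodesic from Lemma~\ref{lem:interior-geodesic}, and convexity of $r\circ\gamma$. The only variation is minor: you combine $\varphi'(0)=0$ with minimality at the other endpoint, where the paper uses vanishing of both endpoint derivatives. Your restriction of ``totally convex'' to interior (in particular minimizing) geodesics matches what the paper's argument actually establishes.
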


\begin{proof}
By Proposition~\ref{prop:pinch-hess}, $\Hess_M r\ge0$.  Let $q\in\Crit(r)$ and let $p\in C(M)$.  Both points lie in the interior: $p$ by \eqref{eq:C-interior}, and $q$ because $\nabla^Mr=R\eta\ne0$ along $\partial M$.  By Lemma~\ref{lem:interior-geodesic}, choose an interior minimizing geodesic $\gamma:[0,1]\to M$ joining $p$ to $q$.  Then
\[
        \varphi(t)=r(\gamma(t))
\]
is convex, because
\[
        \varphi''(t)=\Hess_M r(\dot\gamma,\dot\gamma)\ge0.
\]
Since $p,q$ are critical points of $r$, we have $\varphi'(0)=\varphi'(1)=0$.  A convex function has nondecreasing derivative, hence $\varphi'\equiv0$ and $\varphi$ is constant.  Thus $q\in C(M)$.  This proves $\Crit(r)\subset C(M)$, and the reverse inclusion is automatic.

The same argument applied to two points of $C(M)$ shows that any minimizing geodesic joining them lies in $C(M)$.  Hence $C(M)$ is totally convex.  This also gives connectedness.  Compactness follows because $M$ is compact and $C(M)$ is closed.
\end{proof}

\begin{lemma}[Analytic structure of the minimum set]\label{lem:analytic-C}
Under the hypotheses of Theorem~\ref{thm:mainA}, the set $C(M)$ is either a single point or a smooth closed curve diffeomorphic to $\Sph^1$.
\end{lemma}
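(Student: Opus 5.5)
The plan is to show that $C(M)$ is a compact, connected, totally convex subset of $\Int(M)$ whose tangent directions are controlled by the kernel of $\Hess_M r$. By Lemma~\ref{lem:crit-equals-C}, $C(M)=\Crit(r)$ is compact, connected and totally convex. By Lemma~\ref{lem:radial-nullity}, at every $p\in C(M)$ we have $\dim\ker\Hess_M r(p)\le1$. We split into two cases according to whether some point of $C(M)$ has nullity $0$.

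\emph{Case 1: some $p\in C(M)$ has $\Hess_M r(p)>0$.} Then $p$ is a nondegenerate critical point, so it is isolated in $\Crit(r)=C(M)$. A connected set with an isolated point is that single point, so $C(M)=\{p\}$.

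\emph{Case 2: every $p\in C(M)$ has nullity exactly one.} Then $C(M)$ consists of critical points, and by Corollary~\ref{cor:equality-branch} the Hessian is positive definite on the orthogonal complement of its one-dimensional kernel. So $r$ is Morse--Bott at each point, provided we know the critical set is a submanifold with tangent space equal to the kernel. To get this, take local coordinates near $p$ and split $\nabla r=0$ as in the Morse--Bott lemma. Choose coordinates $(t,y)\in\R\times\R^{k-1}$ with $\partial_t$ spanning the kernel. The equations $\partial_{y}r=0$ have invertible $y$-Jacobian $\Hess_M r|_{\ker^\perp}$, so the implicit function theorem gives a smooth curve $y=y(t)$ containing all critical points near $p$. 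Thus $C(M)$ near $p$ is contained in a smooth embedded arc $\sigma$ through $p$, tangent to $\ker\Hess_M r(p)$.

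The main obstacle is showing that $C(M)$ fills this arc, rather than being a proper closed subset of it such as a single point or a segment. For this we use total convexity together with the equality structure. Suppose first that $C(M)$ contains two distinct points. The minimizing geodesic between them, supplied by Lemma~\ref{lem:interior-geodesic}, lies in $C(M)$. So $C(M)$ contains a nontrivial geodesic segment, and along it $r$ is constant, so its velocity lies in the kernel. Extending the geodesic through an interior point of the segment, the function $\varphi=r\circ\gamma$ is convex and attains its minimum on an interval. I would then argue that the endpoints of the maximal such interval are critical points: near $C(M)$ the geodesic stays on the arc $\sigma$ by uniqueness of geodesics with kernel initial data, and a critical point on $\sigma$ is a limit of critical points. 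If this fails, I would try the alternative of applying the implicit-function description directly at an endpoint $p'$ of the geodesic segment inside $C(M)$. There $C(M)$ near $p'$ is a closed convex subset of $\sigma$ containing a one-sided arc. In that case I would use real-analyticity of $M$ and $r$ (the $f$-minimal system is elliptic with analytic coefficients) to conclude that $r|_\sigma$, which is constant on an arc, is constant on all of $\sigma$, and also that $\nabla r|_\sigma$ vanishes identically. Hence $C(M)$ is open in $\sigma$. This analyticity step is what I expect to carry the argument, and is presumably why the lemma is titled ``analytic structure''.

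This shows $C(M)$ is either a point or a compact, connected, boundaryless one-dimensional embedded submanifold, that is, a circle. To see this, it remains to exclude the mixed situation in which $C(M)$ has more than one point but some point has nullity zero; this is excluded because nondegenerate points are isolated.
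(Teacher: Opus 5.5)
Your argument is correct in substance, but it obtains the local structure of $C(M)$ by a different mechanism than the paper.

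The paper applies the analytic Morse lemma with parameters at each nullity-one point. This gives analytic coordinates with $r=r(p)+|y|^2+h(u)$, so $C(M)$ is locally the zero set of the one-variable analytic function $h$. That zero set is either an isolated point or a full arc, and connectedness then excludes isolated points.

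You instead perform a Lyapunov--Schmidt reduction. The implicit function theorem applied to $\partial_y r=0$ confines all critical points near $p$ to an arc $\sigma=\{y=y(t)\}$, and the function $t\mapsto r(t,y(t))-m_0$ plays exactly the role of $h$. Your route is more elementary, since it needs only the implicit function theorem, but it only locates $C(M)$. The paper's normal form is stronger than this lemma requires. It is chosen because it is reused in Proposition~\ref{prop:sublevel-topology} to identify the sublevel sets $M_\varepsilon$: the convex body $\{v^{2\ell}+|y|^2\le\varepsilon\}$ in the degenerate point case, and the Morse--Bott form once $h\equiv0$ in the circle case.

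Three things should be tightened.

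First, discard your initial attempt at the ``main obstacle''. The claim that the geodesic stays on $\sigma$ by uniqueness of geodesics is unjustified, since $\sigma$ is not known to be a geodesic.

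Second, for the analytic alternative, $\sigma$ must itself be real analytic. So work in analytic coordinates, e.g.\ the graph coordinates in which the immersion is analytic, and use the real-analytic implicit function theorem. Otherwise $r|_\sigma$ is merely smooth and the identity theorem does not apply.

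Third, the continuation must be carried out at every $p\in C(M)$, not only at an endpoint of one geodesic segment. This is immediate: if $p$ is not isolated in $C(M)$, then $t\mapsto r(t,y(t))-m_0$ has zeros accumulating at $t=0$. Hence it vanishes identically, and $C(M)$ coincides with $\sigma$ near $p$. Total convexity is not even needed at this stage.

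With these adjustments, $C(M)$ is a compact connected one-manifold without boundary, hence a circle.
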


\begin{proof}
We first record the analytic regularity input in a fixed elliptic gauge.  At an interior point, after a rigid motion, write $M$ locally as a Euclidean graph
\[
        F(y)=(y,u(y)),\qquad y\in U\subset \R^k,
\]
over its tangent plane.  In this graph, equivalently normal-graph, gauge the equation $\vec H=-cF^\perp$ becomes a strongly elliptic quasilinear system
\[
        g^{ij}(Du)\,\partial_{ij}u^\alpha
        =\mathcal F^\alpha(y,u,Du),
        \qquad \alpha=1,\ldots,N-k,
\]
whose coefficients are real analytic in $(y,u,Du)$ and whose principal matrix $(g^{ij})$ is positive definite.  Thus the usual degeneracy coming from reparametrization invariance has been removed.  Morrey's analytic regularity theorem for analytic elliptic systems \cite[Chapter~6]{Morrey1966} implies that the immersion is real analytic in $\Int(M)$.  Therefore $r=|x|^2/2$ restricted to $M$ is real analytic in a neighborhood of $C(M)$.

Fix $p\in C(M)$.  Lemma~\ref{lem:radial-nullity} gives
\[
        \dim\ker \Hess_M r(p)\le1.
\]
If the nullity is zero, then $p$ is an isolated local minimum.  Suppose now that the nullity is one.  Split the tangent space at $p$ as
\[
        T_pM=K\oplus E,
        \qquad K=\ker\Hess_M r(p),\quad \dim K=1,
\]
where the Hessian is positive definite on $E$.  Choose analytic coordinates
$(u,y)\in K\oplus E$ centered at $p$.  Apply the analytic Morse lemma with
parameters in the precise form of \cite[Theorem~4]{Feehan2020}, taking $u$
as the parameter and $y$ as the nondegenerate variable.  Its hypotheses hold
because $dr(p)=0$ and the $E\times E$ block of $\Hess_Mr(p)$ is positive
definite.  The theorem gives a parameter-preserving real-analytic change of
coordinates for which the $y$-dependence is its fixed nondegenerate quadratic
form; after an analytic linear normalization of that positive form, one obtains
real-analytic coordinates
\[
        (u,y_1,\ldots,y_{k-1})
\]
centered at $p$ such that
\begin{equation}\label{eq:analytic-splitting}
        r(u,y)=r(p)+|y|^2+h(u),
\end{equation}
where $h$ is a real-analytic function of one variable.  Since $p\in C(M)$, $r-r(p)\ge0$ locally; hence $h(u)\ge0$ after restricting the coordinate neighborhood.  Also $h(0)=h'(0)=h''(0)=0$, because the $u$-direction is the null direction at $p$.  In these coordinates,
\[
        C(M)\cap U=\{y=0,\ h(u)=0\}.
\]
The zero set of a real-analytic function of one variable is either discrete or locally the whole interval.  Hence $C(M)$ is locally either an isolated point or a smooth analytic curve.  In particular, it cannot have endpoints of the form $[0,\varepsilon)$, and it cannot have branches.

Since $C(M)$ is connected, if it contains more than one point it cannot contain an isolated point.  Thus it is a compact connected one-dimensional smooth manifold without boundary.  Therefore $C(M)\cong\Sph^1$.
\end{proof}

\begin{proposition}[Sublevel isotopy and topology]\label{prop:sublevel-topology}
Under the hypotheses of Theorem~\ref{thm:mainA}, the topology is determined by
the minimum set:
\[
        C(M)=\{p_0\}
        \quad\Longrightarrow\quad
        M\cong\D^k,
\]
and
\[
        C(M)\cong\Sph^1
        \quad\Longrightarrow\quad
        M\cong\Sph^1\times\D^{k-1}.
\]
Consequently, $M\cong\D^k$ or
$M\cong\Sph^1\times\D^{k-1}$.
\end{proposition}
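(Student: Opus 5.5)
Our plan is to use $r$ as a Morse--Bott type exhaustion. By Lemma~\ref{lem:crit-equals-C}, $r$ has no critical points in $M\setminus C(M)$. Along $\partial M$ we have $\nabla^M r=R\eta$ by \eqref{eq:freebdry-basic}, which points strictly outward. Hence $M$ is the sublevel set $\{r\le R^2/2\}$, and $\partial M$ is the regular level set $\{r=R^2/2\}$.

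First we build a small closed neighborhood $N_\varepsilon=\{r\le m_0+\varepsilon\}$ of $C(M)$ and identify its diffeomorphism type.
\begin{itemize}
\item If $C(M)=\{p_0\}$, the nullity at $p_0$ may be $0$ or $1$. In the nondegenerate case the Morse lemma shows that $N_\varepsilon$ is a closed $k$-disk. In the degenerate case we use the local analytic form \eqref{eq:analytic-splitting}, $r=m_0+|y|^2+h(u)$. Here $h$ is a one-variable analytic function with $h\ge0$ and with $0$ an isolated zero, so $h(u)=a u^{2j}(1+O(u))$ with $a>0$. A further analytic change of the variable $u$ brings $h$ to $u^{2j}$. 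The set $\{|y|^2+u^{2j}\le\varepsilon\}$ is then star-shaped with respect to a radial field along which the function increases, so it is a smooth closed disk.
\item If $C(M)\cong\Sph^1$, Corollary~\ref{cor:equality-branch} shows that $\Hess_M r$ is nondegenerate on the normal bundle of $C(M)$ at every point, so $r$ is Morse--Bott along $C(M)$. The Morse--Bott lemma then identifies $N_\varepsilon$ with the closed unit disk bundle of the normal bundle $\nu C(M)$.
\end{itemize}
Since $M$ is orientable and $C(M)\cong\Sph^1$ is orientable, $\nu C(M)$ is an orientable rank $k-1$ bundle over $\Sph^1$. Such a bundle is trivial, so $N_\varepsilon\cong\Sph^1\times\D^{k-1}$.

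Second, we flow. On $M\setminus\Int N_\varepsilon$ the function $r$ has no critical points, and its values run from $m_0+\varepsilon$ to $R^2/2$. Both ends are regular level sets: one is the level of the $\varepsilon$-neighborhood and the other is $\partial M$. We use the normalized gradient field $\nabla r/|\nabla r|^2$, which is smooth and nonvanishing on this compact region and transverse to both boundary components. Its flow gives a diffeomorphism $M\setminus\Int N_\varepsilon\cong \partial N_\varepsilon\times[0,1]$. Gluing this collar back onto $N_\varepsilon$ gives $M\cong N_\varepsilon$, which yields both implications. The final sentence of the proposition then follows from Lemma~\ref{lem:analytic-C}.

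The main obstacle is the degenerate isolated-point case, where nullity $1$ occurs at an isolated minimum, together with justifying that the sublevel sets near $C(M)$ are smooth manifolds with boundary. If normalizing $h$ to $u^{2j}$ proves delicate, we will instead argue more softly. For small $\varepsilon$, $N_\varepsilon$ is a compact neighborhood of $p_0$ bounded by a regular level set, since $p_0$ is the only critical point. Each sublevel set $\{r\le t\}$ is contractible onto $p_0$, by the gradient flow and local convexity along geodesics. A compact contractible manifold whose boundary is a regular level lying inside a small coordinate ball is star-shaped in the chart coordinates. To prove star-shapedness we would check $\partial_\rho r>0$ off $p_0$ in the coordinates $(u,y)$, using $u h'(u)>0$ for $u\ne0$ near $0$. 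This last computation is the cleanest route, and it is the one we will use.
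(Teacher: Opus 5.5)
Your proposal is correct and follows essentially the same route as the paper. A gradient-flow collar identifies $M$ with a small sublevel set. At an isolated minimum that sublevel set is a disk, by the Morse lemma or the analytic normal form \eqref{eq:analytic-splitting}; over the minimum circle it is a Morse--Bott disk bundle, trivialized by orientability. The only variation is the degenerate isolated-minimum case. There your radial check $(u,y)\cdot\nabla r=2|y|^2+uh'(u)>0$ replaces the paper's normalization to $v^{2\ell}+|y|^2$ and its convex-body argument. It also makes your remark that contractibility forces star-shapedness, which is not valid as a general principle, unnecessary.
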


\begin{proof}
Let $m_0=\min_M r$.  By Lemma~\ref{lem:crit-equals-C}, $\Crit(r)=C(M)$, so $\nabla r\ne0$ on $M\setminus C(M)$.  Choose $\varepsilon>0$ sufficiently small so that $m_0+\varepsilon$ is a regular value of $r$, and put
\[
        M_\varepsilon=\{x\in M:r(x)\le m_0+\varepsilon\}.
\]
On the region $\{m_0+\varepsilon\le r\le R^2/2\}$, the vector field
\[
        X=-\frac{\nabla r}{|\nabla r|^2}
\]
is smooth.  Along $\partial M$, using \eqref{eq:boundary-derivative-r},
\[
        \langle X,\eta\rangle
        =-\frac{\partial_\eta r}{|\nabla r|^2}<0.
\]
Thus the flow of $X$ points into $M$ along the boundary and cannot escape.  Moreover,
\[
        \frac{d}{dt}r(\Phi_t(x))=-1.
\]
It follows that the outer region is a product, so $M$ is diffeomorphic to $M_\varepsilon$.

We now identify $M_\varepsilon$.  Since $\bigcap_{\varepsilon>0}M_\varepsilon=C(M)$ and the sets $M_\varepsilon$ are compact and nested, for every neighborhood $W$ of $C(M)$ there is $\varepsilon_0>0$ with $M_\varepsilon\subset W$ for all $\varepsilon\le\varepsilon_0$.

\smallskip
\noindent\emph{Case 1: $C(M)=\{p\}$.}  Shrinking $\varepsilon$, we may assume that $M_\varepsilon$ is contained in the analytic coordinate neighborhood of Lemma~\ref{lem:analytic-C} centered at $p$.  If $\Hess_Mr(p)$ is nondegenerate, the classical Morse lemma \cite{Milnor1963} gives coordinates $w$ in which $r=m_0+|w|^2$, so $M_\varepsilon=\{|w|^2\le\varepsilon\}$ is a smooth closed $k$-ball.  If the nullity is one, then in the normal form \eqref{eq:analytic-splitting} the real-analytic function $h\ge0$ has an isolated zero at $0$: otherwise $h\equiv0$ near $0$ and $C(M)$ would contain a curve through $p$.  Hence $h$ vanishes at $0$ to finite, necessarily even, order, and since $h''(0)=0$,
\[
        h(u)=u^{2\ell}a(u),
        \qquad \ell\ge2,
        \qquad a\ \text{real analytic},\quad a(0)>0 .
\]
The substitution
\[
        v=u\,a(u)^{1/(2\ell)},
        \qquad
        \frac{dv}{du}(0)=a(0)^{1/(2\ell)}>0,
\]
is a real-analytic diffeomorphism near $0$, because a positive real-analytic function raised to a fixed real power is real analytic; in the coordinates $(v,y)$ one has $h=v^{2\ell}$ and hence
\[
        M_\varepsilon=\{G\le\varepsilon\},
        \qquad
        G(v,y)=v^{2\ell}+|y|^2 .
\]
The function $G$ is smooth and strictly convex on $\R^k$ (although its
Hessian degenerates in the $v$-direction at $v=0$), $G(0)=0$, and
$\nabla G$ vanishes only at the origin.  Hence $\{G\le\varepsilon\}$ is a
compact convex body with nonempty interior and smooth boundary
$\{G=\varepsilon\}$.  We now use the standard smooth-convex-body theorem:
every compact convex body with nonempty interior and smooth boundary is
diffeomorphic, as a smooth manifold with boundary, to the standard closed
ball.  Therefore $M_\varepsilon$, and hence $M$, is diffeomorphic to $\D^k$.

\smallskip
\noindent\emph{Case 2: $C(M)\cong\Sph^1$.}  By Lemma~\ref{lem:radial-nullity}, along $C(M)$ the kernel of $\Hess_Mr$ is exactly the tangent line of $C(M)$.  The normal nondegeneracy is explicit from Corollary~\ref{cor:equality-branch}: if $p\in C(M)$ and $\zeta=c|p|^2$, then the nonzero eigenvalue of $\Hess_Mr=g+A_{x^\perp}$ is
\[
        1+\frac{1-\zeta}{k-1}=\frac{k-\zeta}{k-1}.
\]
Since $p\in\Int(M)$, $|p|<R$; thus $\zeta<k$ (for $c>0$, $\zeta<cR^2\le k$, while for $c=0$, $\zeta=0<k$), and the displayed eigenvalue is strictly positive.  Moreover, in the local model \eqref{eq:analytic-splitting} centered at any point of $C(M)$, the circle case forces $h\equiv0$, so $r=m_0+|y|^2$ there.  Thus $r$ is a Morse--Bott function whose critical submanifold $C(M)$ is compact of index zero.  The precise local normal form is the Morse--Bott lemma of \cite[Theorem~2.10]{Feehan2020}; its standard global tubular-neighborhood consequence for a compact critical submanifold gives a neighborhood $\nu_\delta$ of the zero section of the normal bundle $\nu=\nu\bigl(C(M)\subset M\bigr)$, a fiber metric $|\cdot|$ on $\nu$, and a diffeomorphism $\Phi:\nu_\delta\to\Phi(\nu_\delta)\subset M$ onto a tubular neighborhood of $C(M)$ with
\[
        r\circ\Phi=m_0+|\xi|^2 .
\]
Shrinking $\varepsilon$ so that $M_\varepsilon\subset\Phi(\nu_\delta)$, we conclude that
\[
        M_\varepsilon=\Phi\bigl(\{\xi\in\nu:|\xi|^2\le\varepsilon\}\bigr)
\]
is the closed disk bundle of $\nu$.  Since $M$ is orientable and $C(M)\cong\Sph^1$ is orientable, the normal bundle is orientable; equivalently,
\[
        w_1(\nu)=w_1(TM|_{C(M)})-w_1(TC(M))=0.
\]
Every orientable real vector bundle over $\Sph^1$ is trivial.  Hence
\[
        M_\varepsilon\cong \Sph^1\times\D^{k-1},
\]
and the result follows.
\end{proof}

\begin{remark}[Role of orientability]\label{rem:orientability}
The orientability assumption in Theorem~\ref{thm:mainA} is used only in the last step of Proposition~\ref{prop:sublevel-topology}, where it trivializes the normal disk bundle over the minimum circle.  The Hessian convexity, the radial nullity estimate, the analytic structure of $C(M)$, and the sublevel product structure do not require orientability.  Without orientability, the circle case gives a closed disk bundle over $\Sph^1$; orientability rules out the twisted bundle and yields $\Sph^1\times \D^{k-1}$.
\end{remark}

\begin{proof}[Proof of Theorem~\ref{thm:mainA}]
Proposition~\ref{prop:pinch-hess} gives $\Hess_M r\ge0$.
Lemmas~\ref{lem:radial-nullity}, \ref{lem:crit-equals-C}, and
\ref{lem:analytic-C}, together with
Proposition~\ref{prop:sublevel-topology}, give the two topological alternatives.
If the pinching is strict on $M$, the one-dimensional minimum-set alternative is
impossible.  At each $p\in C(M)$ its tangent line lies in
$\ker\Hess_Mr(p)$; Lemma~\ref{lem:radial-nullity} shows that this kernel is
one-dimensional, and Corollary~\ref{cor:equality-branch} would force equality
in \eqref{eq:main-pinching} at $p$.  Hence $C(M)$ is a point, and
Proposition~\ref{prop:sublevel-topology} gives $M\cong\D^k$.
\end{proof}

\section{Rotational rigidity in dimension two}\label{sec:surface-rigidity}

Let $\Sigma^2\subset\Bcl_R^3$ satisfy the hypotheses of
Theorem~\ref{thm:mainA}, and assume that $\Sigma$ is not diffeomorphic to
$\D^2$.  By Lemma~\ref{lem:analytic-C} and
Proposition~\ref{prop:sublevel-topology}, its minimum set satisfies
$C(\Sigma)\cong\Sph^1$.  We fix a unit normal $\nu$ along $\Sigma$.

\begin{proposition}[Roundness of the minimum circle]\label{prop:roundness}
The curve $C(\Sigma)$ is a great circle of a sphere centered at the origin.
\end{proposition}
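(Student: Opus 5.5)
The plan is to show three things in turn: that $C(\Sigma)$ lies on a sphere centered at the origin, that it is a geodesic of $\Sigma$, and that its ambient curvature vector points radially. Together these force an explicit ODE whose solutions are great circles.

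\textbf{Step 1: $C(\Sigma)$ lies on a sphere of positive radius.} Since $r\equiv m_0$ on $C(\Sigma)$, the curve lies on the sphere of radius $\rho=\sqrt{2m_0}$. We need $\rho>0$. If $\rho=0$, then $x=0$ on $C(\Sigma)$, so $\Hess_\Sigma r=g$ there by Lemma~\ref{lem:hess-identity}. That Hessian is nondegenerate, which contradicts the fact that the tangent line of $C(\Sigma)\cong\Sph^1$ lies in its kernel.

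\textbf{Step 2: $C(\Sigma)$ is a geodesic of $\Sigma$.} Parametrize $C(\Sigma)$ by arclength as $\gamma$, and put $T=\gamma'$. By Lemma~\ref{lem:crit-equals-C}, $C(\Sigma)$ is totally convex, so any minimizing geodesic of $\Sigma$ between two nearby points of $C(\Sigma)$ lies in $C(\Sigma)$. By Lemma~\ref{lem:interior-geodesic}, such a geodesic exists in the interior. Since $C(\Sigma)$ is a smooth embedded circle (Lemma~\ref{lem:analytic-C}), this geodesic must coincide with the short arc of $C(\Sigma)$ joining the two points. Hence short arcs of $\gamma$ are geodesics of $\Sigma$, and $\nabla_TT=0$. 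Therefore
\[
\gamma''=A(T,T)=\langle A(T,T),\nu\rangle\,\nu .
\]
I expect this step to be the main point requiring care. The care is in identifying the minimizing geodesic with the arc of $C(\Sigma)$, which uses the one-dimensional manifold structure of $C(\Sigma)$.

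\textbf{Step 3: the position vector is normal along $C(\Sigma)$.} On $C(\Sigma)$ we have $\nabla^\Sigma r=x^T=0$, so $x=x^\perp$. Since $|x|=\rho$, this gives $x=\pm\rho\,\nu$. Combined with Step 2, $\gamma''$ is parallel to $\gamma$, so $\gamma''=\lambda\gamma$ with
\[
\lambda=\frac{\langle\gamma'',\gamma\rangle}{\rho^2}=\frac{\langle x^\perp,A(T,T)\rangle}{\rho^2}.
\]

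\textbf{Step 4: compute $\lambda$ and conclude.} Since $T$ spans $\ker\Hess_\Sigma r$, Lemma~\ref{lem:hess-identity} gives $0=1+\langle x^\perp,A(T,T)\rangle$. Hence $\lambda=-1/\rho^2$, and $\gamma$ solves the linear ODE
\[
\gamma''=-\rho^{-2}\gamma .
\]
Its solutions are $\gamma(s)=\cos(s/\rho)\,\gamma(0)+\rho\sin(s/\rho)\,\gamma'(0)$. Here $\gamma(0)\perp\gamma'(0)$, because differentiating $|\gamma|^2=\rho^2$ gives $\langle\gamma,\gamma'\rangle=0$. Thus $\gamma$ traces the circle of radius $\rho$ in the plane through the origin spanned by $\gamma(0)$ and $\gamma'(0)$, which is a great circle of the sphere $\{|x|=\rho\}$. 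By connectedness and closedness, $C(\Sigma)$ is exactly this great circle.
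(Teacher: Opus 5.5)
Your proof is correct and follows the paper's argument. In both, $x$ is normal along $C(\Sigma)$ and total convexity makes $C(\Sigma)$ a geodesic of $\Sigma$; then $\Hess_\Sigma r(T,T)=0$ forces $\bar\nabla_TT=A(T,T)=-x/\rho^2$, which is the great-circle equation. Your Steps 1 and 2 also make explicit two points the paper leaves implicit: that $\rho>0$, and that the minimizing geodesic between nearby points of $C(\Sigma)$ is an arc of $C(\Sigma)$.
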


\begin{proof}
Along $C(\Sigma)$,
\[
        \nabla^\Sigma r=x^T=0,
\]
so $x$ is everywhere normal to $\Sigma$.  Let
\[
        \rho=|x|=\sqrt{2m_0}
\]
on $C(\Sigma)$, and let $T$ be the unit tangent to $C(\Sigma)$.  Since $C(\Sigma)$ is a smooth one-dimensional totally convex subset, it is a geodesic in $\Sigma$, and $T\in\ker\Hess_\Sigma r$.  Therefore
\[
        \nabla_T^\Sigma T=0,
        \qquad
        \bar\nabla_TT=A(T,T).
\]
Moreover,
\[
        0=\Hess_\Sigma r(T,T)=1+\langle x,A(T,T)\rangle.
\]
Since $x$ spans the normal line of $\Sigma$ along $C(\Sigma)$, $A(T,T)$ is parallel to $x$, and hence
\[
        A(T,T)=-\frac{x}{\rho^2}.
\]
Thus
\[
        \bar\nabla_TT=-\frac{x}{\rho^2},
\]
which is precisely the equation of a great circle on the sphere $\Sph^2_\rho$ centered at the origin.
\end{proof}

\begin{lemma}[Principal directions along the minimum circle]\label{lem:principal-directions}
Let $V$ be a unit vector tangent to $\Sigma$ and orthogonal to $T$ along $C(\Sigma)$.  Then
\[
        A(T,V)=0
\]
along $C(\Sigma)$.  Hence $T$ and $V$ are principal directions.
\end{lemma}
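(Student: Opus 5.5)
The approach is to use that, along $C(\Sigma)$, $T$ spans the kernel of $\Hess_\Sigma r$, and that $\Hess_\Sigma r=g+A_{x^\perp}$ by \eqref{eq:hess-g-Axperp}. Since $x=x^\perp$ at points of $C(\Sigma)$ (because $x^T=\nabla^\Sigma r=0$ there), we have
\[
        \Hess_\Sigma r(X,Y)=\langle X,Y\rangle+\langle x,A(X,Y)\rangle
\]
for $X,Y$ tangent at points of $C(\Sigma)$.

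First, the kernel identification. Because $C(\Sigma)$ is a smooth curve of minima of $r$, the tangent $T$ lies in $\ker\Hess_\Sigma r$. This is recorded in Case 2 of Proposition~\ref{prop:sublevel-topology} and in the proof of Proposition~\ref{prop:roundness}. Concretely, differentiating $dr=0$ along the curve gives $\Hess_\Sigma r(T,\cdot)=0$. In particular,
\[
        \Hess_\Sigma r(T,V)=0 .
\]
Since $\langle T,V\rangle=0$, this yields $\langle x,A(T,V)\rangle=0$.

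Second, the codimension-one step. In $\Bcl_R^3$ the normal line at $p\in C(\Sigma)$ is spanned by $\nu$, and $x$ is normal there. We also have $x\neq0$ on $C(\Sigma)$: otherwise $C(\Sigma)=\{0\}$ would be a point, contradicting $C(\Sigma)\cong\Sph^1$, and indeed $\rho>0$. Hence $x=\pm\rho\,\nu$. Writing $A(T,V)=h(T,V)\nu$, the identity above gives $\rho\,h(T,V)=0$, so $A(T,V)=0$.

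Third, the conclusion. Since $\{T,V\}$ is an orthonormal frame of $T_p\Sigma$ in which the scalar second fundamental form $h$ has no off-diagonal entry, it diagonalizes the shape operator. Therefore $T$ and $V$ are principal directions, with $h(T,T)=\mp1/\rho$ by Proposition~\ref{prop:roundness}.

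The only point needing care is that $T$ is genuinely in the full kernel, i.e.\ that $\Hess_\Sigma r(T,\cdot)=0$ and not merely $\Hess_\Sigma r(T,T)=0$. I will justify this in either of two ways. One is that positive semidefiniteness (Proposition~\ref{prop:pinch-hess}) together with $\Hess_\Sigma r(T,T)=0$ forces $T\in\ker$ by Cauchy--Schwarz for semidefinite forms. The other is the local Morse--Bott normal form \eqref{eq:analytic-splitting} with $h\equiv0$.
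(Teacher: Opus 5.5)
Your proposal is correct and matches the paper's proof: $T$ lies in the full kernel of the semidefinite form $\Hess_\Sigma r$, so $\langle x,A(T,V)\rangle=0$, and since $x$ is a nonzero normal vector in codimension one, $A(T,V)=0$. Your other justification, differentiating $\nabla^\Sigma r\equiv0$ along $C(\Sigma)$ to get $\Hess_\Sigma r(T,\cdot)=0$, is also valid, and it does not need the semidefiniteness that the paper invokes.
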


\begin{proof}
Since $\Hess_\Sigma r\ge0$ and $T\in\ker\Hess_\Sigma r$, we have
\[
        \Hess_\Sigma r(T,W)=0
\]
for every tangent vector $W$.  Taking $W=V$ gives
\[
        0=\Hess_\Sigma r(T,V)
        =\langle T,V\rangle+\langle x,A(T,V)\rangle
        =\langle x,A(T,V)\rangle.
\]
Because $x$ is normal to $\Sigma$ and nonzero along $C(\Sigma)$, the last equality implies $A(T,V)=0$.
\end{proof}

\begin{proposition}[First-jet vanishing of the rotational Jacobi field]\label{prop:firstjet}
Let $K$ be the rotational Killing field around the axis orthogonal to the plane of $C(\Sigma)$, and set
\[
        u=\langle K,\nu\rangle.
\]
Then
\[
        u=0,
        \qquad
        \nabla u=0
        \quad\text{along }C(\Sigma).
\]
Moreover,
\begin{equation}\label{eq:weighted-jacobi}
        \Delta_f u+(|A|^2+c)u=0.
\end{equation}
\end{proposition}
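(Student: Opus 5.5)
Set up coordinates so that $C(\Sigma)$ is the great circle $\{x_3=0,\ |x|=\rho\}$ from Proposition~\ref{prop:roundness}. Let $K(x)=e_3\times x$ be the rotation field about the $x_3$-axis, and set $u=\langle K,\nu\rangle$. The proof has three steps: show $u=0$, show $\nabla u=0$, and derive the Jacobi equation \eqref{eq:weighted-jacobi}.

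\textbf{Step 1: $u=0$ along $C(\Sigma)$.} At $p\in C(\Sigma)$ we have $K(p)=e_3\times p$. This vector has length $\rho$ and is tangent to the circle, so it is a multiple of $T$. Since $T$ is tangent to $\Sigma$, we get $u=\langle K,\nu\rangle=0$.

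\textbf{Step 2: $\nabla u=0$ along $C(\Sigma)$.} For a tangent vector $X$,
\[
X(u)=\langle \bar\nabla_X K,\nu\rangle+\langle K,\bar\nabla_X\nu\rangle
=\langle e_3\times X,\nu\rangle-A_\nu(K,X),
\]
where $A_\nu(X,Y)=\langle A(X,Y),\nu\rangle$. At $p$, $\nu=\pm p/\rho$ and $K=\rho T$ up to sign.

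\emph{Direction $X=T$.} The vector $e_3\times T$ is horizontal and orthogonal to $T$, so it equals $\pm p/\rho$. Hence $\langle e_3\times T,\nu\rangle=\pm 1$. The second term is $-\rho A_\nu(T,T)=\mp\rho\cdot\rho^{-1}$, using $A(T,T)=-x/\rho^2$ from Proposition~\ref{prop:roundness}. With consistent orientations the two terms cancel, so $T(u)=0$. This also follows directly from differentiating $u\equiv 0$ along the circle.

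\emph{Direction $X=V$.} Because $V\perp T$ and $V\perp p$, we have $V=\pm e_3$, so $e_3\times V=0$. The second term is $\rho A_\nu(T,V)$, which vanishes by Lemma~\ref{lem:principal-directions}. Hence $V(u)=0$, and therefore $\nabla u=0$ along $C(\Sigma)$.

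\textbf{Step 3: the Jacobi equation.} The $f_c$-minimal equation in codimension one reads $H=-c\langle x,\nu\rangle$ (scalar $H$, with the convention $\vec H=H\nu$). I will use two facts.

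First, rotations about the $x_3$-axis preserve the weight $f_c$ and map $f_c$-minimal surfaces to $f_c$-minimal surfaces. Differentiating this one-parameter family gives $L_f u=0$, where $L_f$ is the linearized operator of the weighted mean curvature, i.e.\ the weighted Jacobi operator.

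Second, I compute $L_f$ for a normal variation $\varphi\nu$. The unweighted part is $\delta H=\Delta\varphi+|A|^2\varphi$, up to the sign convention for $H$. For the weight term $c\langle x,\nu\rangle$, the variation is
\[
\delta\langle x,\nu\rangle=\varphi-\langle x,\nabla\varphi\rangle=\varphi-\langle\nabla f,\nabla\varphi\rangle/c .
\]
Combining the two pieces gives $\Delta_f\varphi+(|A|^2+c)\varphi$. With the drift convention \eqref{eq:drift-lap}, this is exactly \eqref{eq:weighted-jacobi}.

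The main obstacle is keeping the sign conventions consistent: the orientation of $\nu$ versus $x$, the sign of $H$, and the direction in which $T$ is oriented. A convention mismatch would turn the cancellation in Step 2 into a doubling, or flip the sign of the potential in Step 3. I will fix $\nu=x/\rho$ on $C(\Sigma)$, and orient $T$ so that $K=\rho T$ there, before doing any of the computations.
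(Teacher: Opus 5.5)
Your proposal is correct and follows essentially the same route as the paper. You get $u=0$ from $K\parallel T$ and $\nu\parallel x$ on the minimum circle, $du(V)=0$ from the product rule together with $A(T,V)=0$, and the weighted Jacobi equation from rotation invariance of the problem. The differences are cosmetic: you kill $\langle \bar\nabla_V K,\nu\rangle$ by noting $V=\pm e_3$ (so $e_3\times V=0$) where the paper uses skew-symmetry of $\Omega$ and $\Omega\nu\parallel T$, and you obtain the potential $|A|^2+c$ by linearizing $H+c\langle x,\nu\rangle$ directly rather than by quoting $\Ric_f(\nu,\nu)=\Hess f(\nu,\nu)=c$.
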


\begin{proof}
By Proposition~\ref{prop:roundness}, $C(\Sigma)$ is a great circle of the sphere $\Sph^2_\rho$ centered at the origin.  Therefore the axis orthogonal to the plane of $C(\Sigma)$ passes through the origin.  The associated rotational Killing field is generated by a constant skew-symmetric matrix $\Omega\in\mathfrak{so}(3)$,
\[
        K(y)=\Omega y.
\]
In particular, since $f(y)=c|y|^2/2$ is radial, $K(f)=0$.

The one-parameter group generated by $K$ preserves the Euclidean metric, the closed ball $\Bcl_R^3$ together with its boundary sphere, and the Gaussian weight.  Hence it maps $f$-minimal surfaces to $f$-minimal surfaces.  Differentiating the $f$-minimal equation along this variation gives the weighted Jacobi equation
\[
        \Delta_f u+(|A|^2+\Ric_f(\nu,\nu))u=0,
\]
where $\Ric_f:=\Ric+\Hess f$.  In the Euclidean Gaussian space,
$\Ric_f(\nu,\nu)=\Hess f(\nu,\nu)=c$, giving \eqref{eq:weighted-jacobi}.

Along $C(\Sigma)$, the vector $K$ is tangent to the great circle, hence parallel to $T$, while $\nu$ is parallel to $x$.  Thus $u=\langle K,\nu\rangle=0$ on $C(\Sigma)$.  Therefore $du(T)=0$.

It remains to compute $du(V)$.  Since $K(y)=\Omega y$,
\begin{align*}
        du(V)
        &=V\langle K,\nu\rangle  \\
        &=\langle \bar\nabla_VK,\nu\rangle+\langle K,\bar\nabla_V\nu\rangle \\
        &=\langle \Omega V,\nu\rangle+\langle K,-S(V)\rangle.
\end{align*}
By Lemma~\ref{lem:principal-directions}, $V$ is a principal direction, so $S(V)=\kappa_V V$.  Since $K\parallel T$, we have $\langle K,S(V)\rangle=0$.  Also $\nu=\pm x/\rho$, hence
\[
        \Omega \nu=\pm \frac1\rho \Omega x=\pm \frac1\rho K\parallel T.
\]
Using skew-symmetry of $\Omega$,
\[
        \langle \Omega V,\nu\rangle=-\langle V,\Omega \nu\rangle=0.
\]
Thus $du(V)=0$, and $\nabla u=0$ on $C(\Sigma)$.
\end{proof}

\begin{proposition}[Nodal-set conclusion]\label{prop:nodal}
The function $u=\langle K,\nu\rangle$ vanishes identically on $\Sigma$.
\end{proposition}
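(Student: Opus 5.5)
Suppose, for contradiction, that $u\not\equiv0$. We will get a contradiction by combining the weighted Jacobi equation \eqref{eq:weighted-jacobi}, the first-jet vanishing along $C(\Sigma)$ from Proposition~\ref{prop:firstjet}, and nodal-domain counting for a Steklov-type eigenvalue problem. This follows the Ambrozio--Nunes scheme, adapted to the drift Laplacian $\Delta_f$.

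\textbf{Step 1: the nodal set of $u$.} The function $u$ solves the linear elliptic equation $\Delta_f u+(|A|^2+c)u=0$, whose coefficients are real analytic in the interior. By the Cheng/Bers local structure theorem, near any interior zero of $u$ that vanishes to order $n\ge2$, the nodal set consists of $n$ smooth arcs meeting at equal angles. By Proposition~\ref{prop:firstjet}, every point of the circle $C(\Sigma)$ is such a singular zero. Hence near each $p\in C(\Sigma)$ the nodal set contains $C(\Sigma)$ together with at least one further arc crossing it transversally. Since there are uncountably many points on $C(\Sigma)$, while singular zeros of a nontrivial solution are isolated, this is already a contradiction.

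If one prefers not to lean on that, a second route is available. Transverse nodal arcs from $C(\Sigma)$ produce at least four nodal domains. The circle separates $\Sigma\cong\Sph^1\times\D^1$ into two half-annuli, and each half is cut further by the crossing arcs.

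\textbf{Step 2: the boundary condition.} Rotations preserve $\partial\B_R^3$, so $u$ arises from a variation through free boundary $f_c$-minimal surfaces. This gives the Robin condition
\[
\partial_\eta u = \frac{1}{R}\,u \quad\text{on } \partial\Sigma,
\]
the usual free boundary Jacobi condition, with second fundamental form of the sphere equal to $1/R$. So $u$ is an eigenfunction, with eigenvalue $0$, of the weighted Jacobi operator with this Robin condition.

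\textbf{Step 3: counting nodal domains.} On each nodal domain $\Omega_i$, the function $u$ is a Dirichlet/Robin eigenfunction with eigenvalue $0$. Its restriction, extended by zero, gives test functions that are orthogonal with respect to the weight $e^{-f}$. A Courant-type argument then bounds the number of nodal domains by one plus the number of nonpositive eigenvalues, i.e.\ by the weighted free boundary index plus nullity. One would need an index bound, and none is available here. This is why I expect the local-structure contradiction of Step 1 to be the cleaner route.

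\textbf{Main obstacle.} The key point is the precise statement that the singular set of a nontrivial solution is discrete. For a second-order elliptic equation in dimension two with smooth coefficients, $\{u=0,\ \nabla u=0\}$ is a discrete set in the interior; this is Bers' expansion, valid for $\Delta_f$ since it is the Laplacian plus a first-order term. The circle $C(\Sigma)$ is a continuum of such points. It lies in $\Int(\Sigma)$ by \eqref{eq:C-interior}, so it is not discrete. Hence $u\equiv0$ near $C(\Sigma)$, and unique continuation, which holds for analytic or even smooth coefficients in a connected $\Sigma$, gives $u\equiv0$ on all of $\Sigma$.
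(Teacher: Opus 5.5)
Your Step 1, together with the closing paragraph, is correct and is essentially the paper's argument. The paper conjugates $u=e^{h/2}v$ to put the weighted Jacobi equation into the Schr\"odinger form $\Delta v+Vv=0$, so that Cheng's theorem (critical nodal points of a nontrivial solution are isolated) applies verbatim; you instead apply Bers' expansion directly to $\Delta_f$, which is legitimate since the drift is only a first-order term, and then close with unique continuation, just as Cheng's theorem does implicitly. The Robin-condition and Courant-counting digression in Steps 2--3 is unnecessary and, as you note yourself, would not close without an index bound, so you can simply drop it.
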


\begin{proof}
Set $h:=f|_{\Sigma}$ and write
\[
        u=e^{h/2}v
        \qquad\text{(equivalently, }v=e^{-h/2}u\text{)}.
\]
A direct calculation, with all differential operators taken on $\Sigma$, gives
\begin{align*}
        \Delta_f u
        &=\Delta u-\langle\nabla h,\nabla u\rangle \\
        &=e^{h/2}\left(
          \Delta v+
          \left(\frac12\Delta h-\frac14|\nabla h|^2\right)v
        \right).
\end{align*}
Consequently, \eqref{eq:weighted-jacobi} is equivalent to the
Schr\"odinger equation
\begin{equation}\label{eq:conjugated-jacobi}
        \Delta v+Vv=0,
        \qquad
        V:=|A|^2+c+\frac12\Delta h-\frac14|\nabla h|^2,
\end{equation}
whose potential $V$ is smooth.  Since $e^{h/2}>0$, the two functions have the
same nodal set.  Moreover,
\[
        \nabla u=e^{h/2}\left(\nabla v+\frac12v\nabla h\right),
\]
so at a nodal point one has $\nabla u=e^{h/2}\nabla v$.  Hence
\[
        \{u=0,\ \nabla u=0\}
        =\{v=0,\ \nabla v=0\}.
\]

Assume now that $u\not\equiv0$; then $v\not\equiv0$.  By
\eqref{eq:C-interior}, the minimum circle $C(\Sigma)$ lies in
$\Int(\Sigma)$.  Cheng's local nodal-set theorem for a nontrivial solution of
a Schr\"odinger equation on a Riemannian surface
\cite[Theorem~2.5]{Cheng1976} implies that its critical nodal points are
isolated.  On the other hand, Proposition~\ref{prop:firstjet} and the equality
of the critical nodal sets above give
\[
        C(\Sigma)\subset\{v=0,\ \nabla v=0\}.
\]
This is impossible because $C(\Sigma)\cong\Sph^1$ has no isolated points.
Therefore $u\equiv0$ on $\Sigma$.
\end{proof}

\begin{proof}[Proof of Theorem~\ref{thm:mainB}]
By Proposition~\ref{prop:nodal}, $\langle K,\nu\rangle\equiv0$.  Thus the
rotational Killing field $K$ is everywhere tangent to the immersion, and its
immersed image is invariant under the one-parameter rotation group generated by
$K$.
\end{proof}

\begin{remark}[Equality along the minimum circle]
By Corollary~\ref{cor:equality-branch}, the annular case is an equality
branch of the radial pinching.  Along $C(\Sigma)$ the scalar form $A_{x^\perp}$ has spectrum
\[
        \left\{-1,\,1-c|x|^2\right\}
\]
when $k=2$.  Thus strict radial pinching rules out the rotational annulus before the nodal-set argument is used.
Theorem~\ref{thm:local-gaussian-family} shows that this equality alternative
is realized by genuinely Gaussian examples for all sufficiently small
positive values of $c$ (equivalently, after scaling, of $cR^2$).
\end{remark}

\section{Rotational ODE, a perturbative Gaussian family, and the global shooting problem}\label{sec:ode}

We now record the ODE for rotational Gaussian free boundary $f$-minimal
annuli.  Rotational and low-cohomogeneity free boundary minimal
hypersurfaces, together with related rotational-domain uniqueness results,
provide the unweighted model background
\cite{FreidinGulianMcGrath2017,BarbosaViana2022,BarbosaFreitasMeloVitorio2023};
the system below is the corresponding radial Gaussian $f$-minimal profile
equation.  In contrast with a purely conditional shooting formulation, we
prove here that the nondegenerate unweighted shooting root generates a
locally unique Gaussian family, including the unweighted endpoint $c=0$,
for all sufficiently small $c\ge0$, and that every member satisfies the
radial pinching.  The remaining issue isolated
below is global uniqueness of the free boundary shooting root in the full
shooting domain: the implicit-function argument gives uniqueness only near
the continued critical-catenoid root.

Let the surface be obtained by rotating an arclength-parametrized profile curve
\[
        s\mapsto (\rho(s),0,z(s)),
        \qquad \rho(s)>0,
\]
about the $z$-axis.  Here $\rho$ is the scalar radial coordinate in the meridian half-plane; the ambient position vector is denoted by $X$.  Write
\[
        \rho'(s)=\cos\theta(s),
        \qquad
        z'(s)=\sin\theta(s).
\]
Choose the unit normal
\[
        \bn=(-\sin\theta\cos\varphi,
              -\sin\theta\sin\varphi,
               \cos\theta).
\]
Then
\[
        X\cdot \bn=-\rho\sin\theta+z\cos\theta,
\]
and the two principal curvatures, with respect to this orientation and our trace convention, are
\[
        \kappa_1=\theta',
        \qquad
        \kappa_2=\frac{\sin\theta}{\rho}.
\]
Thus the scalar form of $\vec H=-cX^\perp$ is
\[
        \theta'+\frac{\sin\theta}{\rho}
        =c(\rho\sin\theta-z\cos\theta).
\]
Equivalently,
\begin{equation}\label{eq:rot-ode}
\begin{cases}
        \rho'=\cos\theta,\\[2mm]
        z'=\sin\theta,\\[2mm]
        \displaystyle \theta'=c(\rho\sin\theta-z\cos\theta)-\frac{\sin\theta}{\rho}.
\end{cases}
\end{equation}
Changing the orientation of the profile normal changes both sides of the scalar mean-curvature equation consistently; the system above fixes one convention.

\begin{lemma}[Minimum-circle neck normalization]\label{lem:minimum-neck-normalization}
Let $\Sigma$ be a rotational annulus arising from Theorem~\ref{thm:mainB}.  After an ambient rotation and a choice of profile orientation, its meridian curve can be parametrized so that it satisfies the neck initial data
\begin{equation}\label{eq:neck-initial}
        \rho(0)=a\in(0,R),
        \qquad
        z(0)=0,
        \qquad
        \theta(0)=\frac\pi2.
\end{equation}
\end{lemma}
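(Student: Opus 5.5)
We combine the structure established for the non-disk branch with the rotational symmetry of Theorem~\ref{thm:mainB}. By Theorem~\ref{thm:mainB}, $\Sigma$ is invariant under rotations generated by $K(y)=\Omega y$, whose axis is orthogonal to the plane of the great circle $C(\Sigma)$ (Propositions~\ref{prop:roundness}, \ref{prop:firstjet}, \ref{prop:nodal}). First apply an element of $SO(3)$ so that this axis is the $z$-axis. Since $C(\Sigma)$ is a great circle of $\Sph^2_\rho$ centered at the origin, it is then the horizontal circle $\{|y|=\rho_0,\ z=0\}$ with $\rho_0=\sqrt{2m_0}$. Here $m_0>0$, because otherwise $C(\Sigma)$ would be a point.

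Next, the meridian. Since $\Sigma$ is a $K$-invariant annulus and $K$ vanishes only on the axis, the intersection of $\Sigma$ with the half-plane $\{\varphi=0,\ \rho>0\}$ is a profile curve.

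This requires the axis not to meet $\Sigma$. Suppose $K$ vanished at a point of $\Sigma$, that is, the axis met $\Sigma$. Then near that point the orbits collapse, and the surface would contain a disk neighborhood capped on the axis. Fixed points of an $\Sph^1$-action on a surface are isolated, and each contributes a disk cap. An annulus with a free circle action carries no such caps (Euler characteristic zero forces no fixed points). So $\rho>0$ on $\Sigma$.

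Parametrize the profile by arclength $s$ with $s=0$ at the point $(\rho_0,0,0)\in C(\Sigma)$. Set $a=\rho_0$; then $a\in(0,R)$ by \eqref{eq:C-interior}, since $C(\Sigma)\subset\Int(\Sigma)\subset\B_R^3$. Also $z(0)=0$ holds by construction.

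It remains to get $\theta(0)=\pi/2$. At $p=(a,0,0)$, the position $x=p$ is normal to $\Sigma$ (as $x^T=0$ on $C(\Sigma)$). The tangent plane at $p$ is therefore spanned by $T=e_2$ (the circle direction) and the profile tangent $(\cos\theta,0,\sin\theta)$. Orthogonality to $x=a e_1$ forces $\cos\theta(0)=0$, so $\theta(0)=\pm\pi/2$. Reversing the arclength orientation $s\mapsto -s$ sends $\theta$ to $\theta+\pi$, and this reversal is exactly the permitted choice of profile orientation. With it we reach $\theta(0)=\pi/2$.

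Finally, we check that the system \eqref{eq:rot-ode} is satisfied. The choice of normal $\bn$ and the sign of $\vec H=-cX^\perp$ transform consistently under this reversal, as noted after \eqref{eq:rot-ode}, so the profile solves \eqref{eq:rot-ode} in the chosen convention.

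The main obstacle is this last consistency check: that the $s\mapsto -s$ flip preserves the ODE as written rather than flipping the sign of $c$. The plan is to verify directly that both sides of $\theta'+\sin\theta/\rho=c(\rho\sin\theta-z\cos\theta)$ change sign under $s\mapsto-s$, $\theta\mapsto\theta+\pi$. The left side and $X\cdot\bn$ each pick up a factor $-1$, so the equation is invariant.
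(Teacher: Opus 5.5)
Your proposal is correct and follows the paper's argument: after rotating the axis to the $z$-axis, you get $\cos\theta(0)=0$ from $x^T=0$ on $C(\Sigma)$, which is the paper's computation $0=r'(0)=a\rho'(0)$ in different words, and you then make the same orientation reversal. Your Euler-characteristic argument that the axis misses $\Sigma$ and your explicit check that $s\mapsto -s$, $\theta\mapsto\theta+\pi$ preserves \eqref{eq:rot-ode} are extra details the paper leaves implicit; only the local statement near the neck is actually needed.
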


\begin{proof}
By Proposition~\ref{prop:roundness}, the minimum circle $C(\Sigma)$ is a great circle of a sphere centered at the origin.  Rotating the ambient coordinates, we may assume that this circle lies in the plane $z=0$ and that the rotational axis is the $z$-axis.  In the meridian half-plane the minimum circle corresponds to a single point $(\rho,z)=(a,0)$ with $a>0$.

The set $C(\Sigma)$ is the minimum set of $r=|X|^2/2$.  Along the profile curve,
\[
        r(s)=\frac12\bigl(\rho(s)^2+z(s)^2\bigr).
\]
At the minimum-circle point $s=0$ this function has a minimum.  Hence
\[
        0=r'(0)=\rho(0)\rho'(0)+z(0)z'(0)=a\rho'(0).
\]
Since $a>0$, we have $\rho'(0)=0$.  As the profile is parametrized by arclength, $\rho'=\cos\theta$, so $\cos\theta(0)=0$.  Reversing the profile orientation if necessary gives $\theta(0)=\pi/2$.
\end{proof}

When $c$ is fixed, we suppress it from the shooting notation.  When the
Gaussian parameter is allowed to vary, we write
$(\rho_{c,a},z_{c,a},\theta_{c,a})$, $S(c,a)$, and
$\widehat{\cQ}(c,a)$ for the corresponding solution, first hitting time,
and boundary residual.

For $a\in(0,R)$, let $(\rho_a,z_a,\theta_a)$ denote the maximal solution of \eqref{eq:rot-ode} with the neck initial data \eqref{eq:neck-initial}, defined on a maximal interval $[0,s_a^+)$ on which $\rho_a>0$, and let
\[
        S_a=\inf\bigl\{s\in[0,s_a^+):\rho_a(s)^2+z_a(s)^2=R^2\bigr\}
        \ \in\ (0,+\infty]
\]
be the first time at which the orbit reaches the sphere of radius $R$, with the convention $\inf\varnothing=+\infty$.  We do not claim that $S_a$ is finite for every $a$, nor that the sphere is reached transversally when it is reached; both questions are part of the shooting problem.  Define the \emph{shooting domain}
\begin{equation}\label{eq:shooting-domain}
        \cA
        =\Bigl\{a\in(0,R):\ S_a<+\infty
        \ \text{ and }\
        \tfrac{d}{ds}\bigl(\rho_a^2+z_a^2\bigr)(S_a)>0\Bigr\}.
\end{equation}
By smooth dependence on initial conditions and the implicit function theorem applied to $(s,a)\mapsto\rho_a(s)^2+z_a(s)^2-R^2$, the set $\cA$ is open in $(0,R)$ and $a\mapsto S_a$ is smooth on $\cA$.  For arbitrary values of $(c,R)$, the set $\cA$ need a priori be neither an interval nor nonempty; Theorem~\ref{thm:local-gaussian-family} shows, however, that it contains a free boundary root when $c>0$ is sufficiently small for fixed $R$.  The free boundary condition is that the profile curve meets the circle of radius $R$ orthogonally, equivalently that the position vector is parallel to the unit tangent at the endpoint; in the meridian plane this says that the position vector has vanishing normal component.  Accordingly, define the shooting map
\begin{equation}\label{eq:shooting-target}
        \cQ:\cA\longrightarrow\R,
        \qquad
        \cQ(a)=\rho_a(S_a)\sin\theta_a(S_a)-z_a(S_a)\cos\theta_a(S_a),
\end{equation}
which is smooth on $\cA$.  A rotational free boundary Gaussian $f$-minimal annulus corresponds exactly to a root
\[
        a\in\cA,
        \qquad
        \cQ(a)=0,
\]
its backward profile branch $s<0$ being the mirror image of the forward branch across $\{z=0\}$, by the reflection symmetry of \eqref{eq:rot-ode} with the data \eqref{eq:neck-initial}.  No rotational solution is lost by restricting to $\cA$: if a free boundary annulus is normalized as in Lemma~\ref{lem:minimum-neck-normalization}, its interior lies in the open ball $\B_R^3$ because $\Sigma\cap\partial\B_R^3=\partial\Sigma$ by the defining condition \eqref{eq:freebdry-contact}, so the boundary parameter of the forward profile arc is the first hitting time $S_a<+\infty$, and there the free boundary condition makes the position vector parallel to the unit tangent, whence
\[
        \frac{d}{ds}\bigl(\rho_a^2+z_a^2\bigr)(S_a)
        =2\bigl(\rho_a\cos\theta_a+z_a\sin\theta_a\bigr)(S_a)
        =2\,X\cdot T
        \in\{\pm2R\}.
\]
Since $\rho_a^2+z_a^2<R^2$ on $[0,S_a)$, this derivative is nonnegative, hence equal to $2R>0$.  The hitting is therefore automatically transversal, and $a\in\cA$.

\begin{lemma}[Nondegeneracy of the unweighted shooting root]\label{lem:catenoid-shooting-nondegenerate}
Fix $R>0$.  Let $t_*>0$ be the unique solution of
\begin{equation}\label{eq:tstar-critical-catenoid}
        t_*\tanh t_*=1,
\end{equation}
and set
\begin{equation}\label{eq:a0-critical-catenoid}
        a_0=\frac{R}{\sqrt{\cosh^2t_*+t_*^2}}.
\end{equation}
For $c=0$, the shooting map has a root at $a_0$, corresponding to the
critical catenoid, and this root is nondegenerate:
\begin{equation}\label{eq:shooting-nondegenerate}
        \partial_a\widehat{\cQ}(0,a_0)>0.
\end{equation}
\end{lemma}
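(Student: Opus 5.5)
The plan is to exploit that at $c=0$ the system \eqref{eq:rot-ode} with the neck data \eqref{eq:neck-initial} is explicitly solvable by catenoids. This reduces $\widehat{\cQ}(0,\cdot)$ to a closed-form function of one variable, whose derivative at $a_0$ can then be signed by the chain rule.

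\emph{Step 1: the explicit solution.} For $c=0$ and $a\in(0,R)$, I would parametrize the candidate profile by $t\ge0$:
\[
  \rho=a\cosh t,\qquad z=at,\qquad s=a\sinh t,
\]
so that $ds/dt=a\cosh t$ and hence $\rho=\sqrt{a^2+s^2}$. The unit tangent in the $(\rho,z)$-plane is $(\tanh t,\operatorname{sech}t)$, which gives
\[
  \cos\theta=\tanh t,\qquad \sin\theta=\operatorname{sech}t,\qquad \theta(0)=\tfrac\pi2 .
\]
Differentiating, $d\theta/dt=-\operatorname{sech}t$. Therefore $\theta'=-1/(a\cosh^2t)$, which equals $-\sin\theta/\rho$. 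This checks the third equation of \eqref{eq:rot-ode} at $c=0$, with the paper's orientation convention. By uniqueness for the ODE, this is $(\rho_{0,a},z_{0,a},\theta_{0,a})$, and it exists for all $s\ge0$ with $\rho>0$.

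\emph{Step 2: the domain and the residual.} Along the orbit,
\[
  \rho^2+z^2=a^2(\cosh^2t+t^2).
\]
This is strictly increasing in $t$ and tends to infinity, so there is a unique hitting parameter $T(a)>0$ solving
\[
  \cosh^2T+T^2=\frac{R^2}{a^2}.
\]
The $t$-derivative $2a^2(\cosh t\sinh t+t)$ is positive there, so the hitting is transversal. Hence every $a\in(0,R)$ lies in $\cA$. Substituting into \eqref{eq:shooting-target} gives
\[
  \widehat{\cQ}(0,a)=a\cosh T\cdot\operatorname{sech}T-aT\tanh T=a\bigl(1-T(a)\tanh T(a)\bigr).
\]
Since $t\tanh t$ is strictly increasing from $0$ to $\infty$, the equation $t\tanh t=1$ has the unique solution $t_*$. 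The root is therefore characterized by $T(a)=t_*$, which is exactly $a=a_0$ as in \eqref{eq:a0-critical-catenoid}. The resulting surface is the critical catenoid: $x^\perp=0$ on the boundary.

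\emph{Step 3: nondegeneracy.} At $a_0$ the factor $1-T\tanh T$ vanishes, so
\[
  \partial_a\widehat{\cQ}(0,a_0)=-a_0\bigl(\tanh t_*+t_*\operatorname{sech}^2t_*\bigr)\,T'(a_0).
\]
The function $T$ is smooth by the implicit function theorem, since $\partial_T(\cosh^2T+T^2)>0$. Implicit differentiation of $\cosh^2T+T^2=R^2/a^2$ gives
\[
  T'(a)=\frac{-2R^2a^{-3}}{2(\cosh T\sinh T+T)}<0.
\]
With the bracket positive, this yields \eqref{eq:shooting-nondegenerate}.

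The only real care point is Step 1: confirming that the catenoid profile with this orientation satisfies \eqref{eq:rot-ode} rather than its sign-flipped version. Everything after that is elementary calculus.
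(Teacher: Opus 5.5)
Your proposal is correct and follows the paper's proof: you use the explicit catenary $\rho=a\cosh t$, $z=at$, reduce the residual to $a\bigl(1-T(a)\tanh T(a)\bigr)$ via the hitting equation $a^2(\cosh^2T+T^2)=R^2$, and sign the derivative at $a_0$ by implicit differentiation. Your version adds two checks the paper leaves implicit: that the catenary solves \eqref{eq:rot-ode} with the stated orientation, and that at $c=0$ every $a\in(0,R)$ lies in $\cA$ (the paper only works near the root).
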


\begin{proof}
For $c=0$, the solution with neck radius $a$ is the catenary
\begin{equation}\label{eq:scaled-catenary-profile}
        \rho_{0,a}=a\cosh t,
        \qquad
        z_{0,a}=at,
        \qquad
        s=a\sinh t,
\end{equation}
and hence
\begin{equation}\label{eq:catenary-angle}
        \cos\theta_{0,a}=\tanh t,
        \qquad
        \sin\theta_{0,a}=\operatorname{sech}t.
\end{equation}
Write
\[
        D(t)=\cosh^2t+t^2.
\]
Near the critical root, the first hitting parameter $t=t(a)$ is determined
by
\begin{equation}\label{eq:unweighted-hitting-t}
        a^2D(t(a))=R^2.
\end{equation}
At that hitting point the shooting residual is
\begin{equation}\label{eq:unweighted-shooting-explicit}
        \widehat{\cQ}(0,a)
        =a\bigl(1-t(a)\tanh t(a)\bigr).
\end{equation}
Thus \eqref{eq:tstar-critical-catenoid} and
\eqref{eq:a0-critical-catenoid} give a root.  The function
$t\mapsto t\tanh t$ is strictly increasing from $0$ to $+\infty$, so
$t_*$, and hence $a_0$, is unique in the unweighted shooting family.

Differentiating \eqref{eq:unweighted-hitting-t} gives
\[
        t'(a)=-\frac{2D(t(a))}{aD'(t(a))}.
\]
Since the prefactor in parentheses in
\eqref{eq:unweighted-shooting-explicit} vanishes at $a_0$, we obtain
\begin{equation}\label{eq:unweighted-shooting-derivative}
\begin{aligned}
        \partial_a\widehat{\cQ}(0,a_0)
        &=-a_0\bigl(\tanh t_*+t_*\operatorname{sech}^2t_*\bigr)t'(a_0)\\
        &=\frac{2D(t_*)
        \bigl(\tanh t_*+t_*\operatorname{sech}^2t_*\bigr)}{D'(t_*)}>0.
\end{aligned}
\end{equation}
This proves the claimed nondegeneracy.
\end{proof}

\begin{proof}[Proof of Theorem~\ref{thm:local-gaussian-family}]
At $(c,a)=(0,a_0)$, the critical catenoid reaches
$\partial\B_R^3$ transversally.  The right-hand side of
\eqref{eq:rot-ode} is smooth as long as $\rho>0$.  Smooth dependence of
ODE solutions on parameters, followed by the implicit function theorem
applied to
\[
        (s,c,a)\longmapsto
        \rho_{c,a}(s)^2+z_{c,a}(s)^2-R^2,
\]
therefore gives a smooth zero $S(c,a)$ of the hitting equation in a
neighborhood of the critical endpoint.  This zero is still the first hitting
time: on the critical catenoid,
$s\mapsto \rho_{0,a_0}(s)^2+z_{0,a_0}(s)^2$ is strictly increasing for
$s>0$.  Hence it is uniformly smaller than $R^2$ on every compact
subinterval bounded away from the endpoint, and smooth ODE dependence
preserves this strict gap for nearby $(c,a)$.  Together with transversality
near the endpoint, this excludes an earlier hit.  In this neighborhood define
\begin{equation}\label{eq:two-parameter-shooting-map}
        \widehat{\cQ}(c,a)
        =\rho_{c,a}(S(c,a))\sin\theta_{c,a}(S(c,a))
        -z_{c,a}(S(c,a))\cos\theta_{c,a}(S(c,a)).
\end{equation}
Lemma~\ref{lem:catenoid-shooting-nondegenerate} and the implicit function
theorem yield $\varepsilon_R>0$ and a unique smooth function
\begin{equation}\label{eq:local-neck-family}
        a:[0,\varepsilon_R)\longrightarrow(0,R),
        \qquad
        a(0)=a_0,
        \qquad
        \widehat{\cQ}(c,a(c))=0.
\end{equation}
Here uniqueness is only asserted in the parameter neighborhood on which
the implicit function theorem is applied.  Shrinking $\varepsilon_R$ if
necessary, we have $cR^2<2$ for $0<c<\varepsilon_R$.  Reflecting the
profile across $\{z=0\}$ and rotating it once about the $z$-axis produces
a smooth free boundary $f_c$-minimal annulus $\Sigma_c$.

It remains to prove that this family satisfies the pinching.  On the
positive half-profile write
\begin{equation}\label{eq:PQh-definitions}
\begin{aligned}
        P&=\rho\cos\theta+z\sin\theta=\langle X,T\rangle,\\
        Q&=\rho\sin\theta-z\cos\theta,\\
        h&=\frac{\sin\theta}{\rho}=\kappa_2.
\end{aligned}
\end{equation}
The ODE gives $\kappa_1=cQ-h$, and direct differentiation gives the two
identities
\begin{equation}\label{eq:Pprime-and-Qh}
        P'=1-cQ^2+Qh,
        \qquad
        1-Qh=\frac{P\cos\theta}{\rho}.
\end{equation}
Since $|x^\perp|^2=Q^2$ and
\[
        |A_{x^\perp}|^2
        =Q^2\bigl((cQ-h)^2+h^2\bigr),
\]
the pinching deficit admits the exact factorization
\begin{equation}\label{eq:pinching-deficit-factorization}
\begin{aligned}
        \mathscr E
        &:=1+\bigl(1-c|x^\perp|^2\bigr)^2-|A_{x^\perp}|^2\\
        &=2(1-Qh)(1+Qh-cQ^2)\\
        &=2\,\frac{P\cos\theta}{\rho}\,P'.
\end{aligned}
\end{equation}
Thus no estimate has been lost: it suffices to determine the signs of the
three factors $P$, $\cos\theta$, and $P'$.

For the critical catenoid, formulas
\eqref{eq:scaled-catenary-profile}--\eqref{eq:catenary-angle} give, on
$0<t\le t_*$,
\begin{equation}\label{eq:catenoid-sign-quantities}
\begin{aligned}
        P_0&=a_0\bigl(\sinh t+t\operatorname{sech}t\bigr)>0,\\
        Q_0&=a_0\bigl(1-t\tanh t\bigr)\ge0,\\
        h_0&=\frac{1}{a_0\cosh^2t}>0,\\
        P_0'&=1+Q_0h_0\ge1,
        \qquad \cos\theta_0=\tanh t>0.
\end{aligned}
\end{equation}
The only common zero that must be treated separately is the neck $s=0$,
where $P_0(0)=\cos\theta_0(0)=0$.  For the perturbed family,
\begin{equation}\label{eq:neck-first-derivatives}
\begin{aligned}
        (\cos\theta)'(0)
        &=\frac{1}{a(c)}-ca(c),\\
        P'(0)&=2-ca(c)^2.
\end{aligned}
\end{equation}
Both quantities are positive for all sufficiently small $c\ge0$.
Consequently, $P>0$ and $\cos\theta>0$ immediately to the right of the
neck, uniformly for $c$ in a smaller parameter interval.

Away from the neck, the quantities in
\eqref{eq:catenoid-sign-quantities} have positive lower bounds on compact
subintervals.  Identify the varying profile intervals by
$s=\sigma S(c,a(c))$, $0\le\sigma\le1$.  The $C^1$-smooth dependence of
the ODE solution on $(c,a)$, together with
\eqref{eq:neck-first-derivatives}, then permits a further shrinking of
$\varepsilon_R$ so that
\begin{equation}\label{eq:positive-half-signs}
        \rho>0,
        \qquad P>0,
        \qquad \cos\theta>0,
        \qquad P'>0
        \quad\text{for }0<s\le S(c,a(c)).
\end{equation}
Equation \eqref{eq:pinching-deficit-factorization} now gives
$\mathscr E>0$ on the positive half-profile away from the neck.

At the neck,
\[
        Q(0)=a(c),
        \qquad h(0)=\frac1{a(c)},
\]
so $1-Q(0)h(0)=0$ and hence $\mathscr E(0)=0$.  Under reflection,
\[
        \rho(-s)=\rho(s),
        \qquad z(-s)=-z(s),
        \qquad \theta(-s)=\pi-\theta(s).
\]
Thus $P$ and $\cos\theta$ both change sign, while their product and $P'$
are even.  It follows that $\mathscr E>0$ on the negative half-profile
away from the neck as well.  After rotation, equality in
\eqref{eq:local-family-pinching} therefore holds precisely on the minimum
circle.

Finally,
\[
        \kappa_2(0)=\frac1{a(c)}>0,
\]
so $\Sigma_c$ is non-flat.  Moreover,
$\sin\theta_0=\operatorname{sech}t$ has a positive lower bound on the
unweighted half-profile; smooth dependence preserves $\sin\theta>0$ for
small $c$.  Together with $\cos\theta>0$ for $s>0$, this shows that each
half-profile is embedded.  Moreover, because $z'(s)=\sin\theta>0$, the
positive half-profile lies strictly in the upper half-space $\{z>0\}$ for
$s>0$; its reflected half lies in $\{z<0\}$, and the two meet only at the
neck point $z=0$.  After rotation they therefore meet only along the common
minimum circle, so the full surface has no self-intersections.  The standard
one-fold angular parametrization gives an embedded annulus, completing the
proof.
\end{proof}

\begin{question}[Global uniqueness of the rotational shooting root]\label{q:shooting}
Fix $R>0$ and $c>0$ with $cR^2\le2$.  Is a root of
\[
        \cQ(a)=0,\qquad a\in\cA,
\]
unique in the full shooting domain $\cA$?  In the perturbative regime of
Theorem~\ref{thm:local-gaussian-family}, this asks whether the locally
unique root continued from the critical catenoid is the only root in
$\cA$.
\end{question}

\begin{remark}
The derivative $\cQ'(a)$ can be expressed in terms of the boundary value
of the variational Jacobi field along the profile curve together with the
transversality of the hitting time $S_a$.  Proving that $\cQ'(a)$ has a
fixed sign is a nontrivial Sturm--Liouville non-oscillation problem.  A
separate phase-plane analysis may provide useful context, but it is not by
itself a proof of uniqueness for the free boundary shooting root in the
ball.

Theorem~\ref{thm:local-gaussian-family} already resolves existence and
pinching compatibility in a neighborhood of the unweighted critical
catenoid.  Its nondegeneracy calculation proves only local monotonicity of
the two-parameter shooting map near $(0,a_0)$; it does not imply that
$\cQ'(a)$ has one sign on every component of $\cA$.  This is exactly the
distinction between local implicit-function uniqueness and the global
shooting uniqueness asked for in Question~\ref{q:shooting}.
\end{remark}

\begin{proposition}[Conditional global model uniqueness for embedded rotational Gaussian annuli]\label{prop:conditional-uniqueness}
Fix $c>0$ and $R>0$, and let $\cA$ and $\cQ$ be the shooting domain and the
shooting map of \eqref{eq:shooting-domain} and \eqref{eq:shooting-target}.
Suppose that $\cQ$ has a unique root $a_0\in\cA$.  Then every
\emph{embedded} annulus in the non-disk branch of
Theorem~\ref{thm:mainB} is congruent, as an embedded submanifold and up to an
ambient rotation, to the rotational model with neck radius $a_0$.  Under this
additional uniqueness assumption, this model may be regarded as the Gaussian
critical $f$-catenoid associated with $(c,R)$.
\end{proposition}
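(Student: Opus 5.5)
The plan is to chain Theorem~\ref{thm:mainB}, Lemma~\ref{lem:minimum-neck-normalization} and the uniqueness hypothesis on $\cQ$. Let $\Sigma$ be an embedded annulus in the non-disk branch of Theorem~\ref{thm:mainB}. By that theorem, $\Sigma$ is invariant under the rotation group generated by a Killing field $K$ about an axis through the origin. By Proposition~\ref{prop:roundness}, this axis is orthogonal to the plane of the great circle $C(\Sigma)$.

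\textbf{Step 1: normalize.} After an ambient rotation in $SO(3)$, the axis is the $z$-axis and $C(\Sigma)$ lies in $\{z=0\}$. Since $\Sigma$ is embedded and rotationally invariant, it meets the meridian half-plane $\{\varphi=0,\ \rho>0\}$ in a compact embedded curve $\Gamma$. The surface $\Sigma$ is recovered from $\Gamma$ by one-fold rotation. Here embeddedness is used: it excludes angular multiple covers, so that the image determines the profile. I also need $\Gamma$ to avoid the axis. This holds because an annulus meeting the axis would contain a fixed point of the rotation, which would force a disk-type cap. Concretely, $\rho>0$ on $\Sigma$ because $\Sigma$ is not a disk and the orbits are circles.

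\textbf{Step 2: reduce to a root.} Parametrize $\Gamma$ by arclength. Its angle function satisfies \eqref{eq:rot-ode}, since $\vec H=-cx^\perp$ reduces to the scalar profile equation, possibly after the orientation flip mentioned after \eqref{eq:rot-ode}. Lemma~\ref{lem:minimum-neck-normalization} gives the neck data \eqref{eq:neck-initial} with $a\in(0,R)$, where $a$ is the radius of $C(\Sigma)$. By ODE uniqueness, the forward branch of $\Gamma$ coincides with $(\rho_a,z_a,\theta_a)$ up to its boundary endpoint. The discussion following \eqref{eq:shooting-target} shows that this endpoint is the first hitting time $S_a$, that the hitting is transversal, and that free boundary orthogonality gives $\cQ(a)=0$ with $a\in\cA$. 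The backward branch is determined in the same way, either by ODE uniqueness backward from $s=0$ or by the reflection symmetry, and it also ends at its first hitting of the sphere. Hence $\Gamma$ is exactly the reflected profile built from $a$.

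\textbf{Step 3: conclude.} By hypothesis $a=a_0$, so $\Sigma$ equals the rotational model with neck radius $a_0$, up to the rotation from Step 1. The model with root $a_0$ is therefore the only embedded non-disk example, which justifies calling it the Gaussian critical $f$-catenoid.

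I expect Step 1 to be the main obstacle: identifying the image of an embedded invariant surface with the rotation of a single regular connected profile arc that stays off the axis and is traversed once. I would handle it as follows. The orbits of $K$ on $\Sigma$ are circles, since $K$ is tangent to $\Sigma$ and $K\neq0$ off the axis. The orbit space $\Sigma/S^1$ is then a compact $1$-manifold with boundary, which is an interval because $\Sigma$ is an annulus. Embeddedness makes the map from this orbit space to the meridian half-plane injective, which completes the identification.
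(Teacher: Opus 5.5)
Your proposal is correct and follows the same route as the paper's proof. Theorem~\ref{thm:mainB} and Lemma~\ref{lem:minimum-neck-normalization} put the meridian profile in neck form, and the discussion after \eqref{eq:shooting-target} gives $a\in\cA$ with $\cQ(a)=0$; the hypothesis then forces $a=a_0$, and reflection symmetry plus embeddedness (no angular multiple cover) identify $\Sigma$ with the model. Your Step~1 orbit-space argument supplies details the paper leaves implicit, and the axis-avoidance claim is cleanest stated as follows: a fixed point on the axis would be a third endpoint of the interval $\Sigma/\Sph^1$, besides the two boundary circles (equivalently, it would force $\chi(\Sigma)>0$).
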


\begin{proof}
Let $\Sigma$ be such an embedded annulus.  By
Theorem~\ref{thm:mainB}, it is rotationally symmetric, and by
Lemma~\ref{lem:minimum-neck-normalization} its meridian profile is, after an
ambient rotation and a choice of profile orientation, the solution of
\eqref{eq:rot-ode} with the neck data \eqref{eq:neck-initial} for some
$a\in(0,R)$.  As shown after \eqref{eq:shooting-target}, the free boundary
condition forces $S_a<+\infty$, transversal hitting, and $\cQ(a)=0$; hence
$a\in\cA$ is a root of $\cQ$, and by hypothesis $a=a_0$.  The backward branch
of the profile is the reflection of the forward branch across $\{z=0\}$, so
the whole meridian profile is determined by $a_0$.  Embeddedness excludes a
nontrivial angular multiple cover; hence this profile determines $\Sigma$ as
an embedded submanifold, up to an ambient rotation.
\end{proof}

\begin{remark}
The embeddedness hypothesis in Proposition~\ref{prop:conditional-uniqueness}
is essential for this formulation.  If immersions are allowed, composing the
angular variable of a rotational model with a degree-$m$ covering of
$\Sph^1$, $m\ge2$, preserves the profile, the shooting root, and the pinching
condition, but gives a multiply covered immersion.  Without embeddedness, the
corresponding conclusion is uniqueness of the rotational image, not of the
immersion.
\end{remark}

Theorem~\ref{thm:local-gaussian-family} verifies the existence of at least
one root near the critical-catenoid root, but it does not verify the global
uniqueness hypothesis of Proposition~\ref{prop:conditional-uniqueness}:
additional roots outside the implicit-function neighborhood are not
excluded.

\section{Examples and limiting cases}

\begin{example}[Equatorial disks]
Let $M=\Bcl_R^k\times\{0\}\subset \Bcl_R^N$.  Then $x^\perp=0$, $A=0$, and $\vec H=0$.  Thus $M$ is free boundary Gaussian $f$-minimal for every $c\ge0$, and the pinching condition holds trivially.  The conclusion $M\cong\D^k$ is sharp in the disk case.
\end{example}

\begin{remark}
When $c=0$, the equation becomes the usual minimal equation and the radial pinching condition in Theorem~\ref{thm:mainA} reduces to
\[
        |A_{x^\perp}|^2\le \frac{k}{k-1}.
\]
In dimension two and codimension one this is exactly the Ambrozio--Nunes support-function condition
\[
        |A|^2\langle x,\nu\rangle^2\le2.
\]
The critical catenoid realizes the annular alternative in this unweighted limit: the inequality is saturated on its minimum circle, and Ambrozio--Nunes prove that equality at some point forces the critical catenoid \cite{AmbrozioNunes2021}.  Thus the solid-tube alternative is nonempty at $c=0$.

For the genuinely Gaussian case $c>0$,
Theorem~\ref{thm:local-gaussian-family} proves more: for every fixed $R>0$
there is an $\varepsilon_R>0$ such that the solid-tube alternative is nonempty
for every $0<c<\varepsilon_R$, and the resulting embedded rotational
annulus satisfies \eqref{eq:main-pinching}, with equality exactly along its
minimum circle.  Thus the Gaussian equality alternative is non-vacuous.  The result
is perturbative, however; it does not assert continuation over the entire
range $0<cR^2\le2$, nor does it exclude additional shooting roots far from
the critical-catenoid family.

Finally, since
\[
        |A_{x^\perp}|^2
        =\sum_{i,j}\langle x^\perp,A(e_i,e_j)\rangle^2
        \le |x^\perp|^2|A|^2,
\]
the present condition controls only the radial contraction of the second fundamental form.  Nevertheless, for $c>0$ its right-hand side is
\[
        1+\frac{(1-\zeta)^2}{k-1},\qquad \zeta=c|x^\perp|^2,
\]
and this number is less than $k/(k-1)$ when $0<\zeta<2$.  Thus the Gaussian radial-projection pinching is not a pointwise weakening of the Barbosa--Viana full-norm pinching \cite{BarbosaViana2022}.  The correct comparison is structural rather than monotone: the hypothesis uses fewer components of $A$, but with a drift-dependent threshold.  Only at $c=0$ does the scalar radial condition become an actual weakening of the full-norm condition in codimension at least two; in codimension one the scalar quantities coincide.
\end{remark}

\end{document}